\newcommand\@dotsep{4.5}
\def\@tocline#1#2#3#4#5#6#7{\relax
  \ifnum #1>\c@tocdepth 
  \else
    \par \addpenalty\@secpenalty\addvspace{#2}%
    \begingroup \hyphenpenalty\@M
    \@ifempty{#4}{%
      \@tempdima\csname r@tocindent\number#1\endcsname\relax
    }{%
      \@tempdima#4\relax
    }%
    \parindent\z@ \leftskip#3\relax
    \advance\leftskip\@tempdima\relax
    \rightskip\@pnumwidth plus1em \parfillskip-\@pnumwidth
    #5\leavevmode\hskip-\@tempdima #6\relax
    \leaders\hbox{$\m@th
      \mkern \@dotsep mu\hbox{.}\mkern \@dotsep mu$}\hfill
    \hbox to\@pnumwidth{\@tocpagenum{#7}}\par
    \nobreak
    \endgroup
  \fi}
\let\oldtocsection=\tocsection
\let\oldtocsubsection=\tocsubsection
\renewcommand{\tocsection}[2]{\hspace{0em}\oldtocsection{#1}{#2}}
\renewcommand{\tocsubsection}[2]{\hspace{22pt}\oldtocsubsection{#1}{#2}}
\newcommandx{\yaHelper}[2][1=\empty]{%
\ifthenelse{\equal{#1}{\empty}}%
  { \ensuremath{ \scriptstyle{ #2 } } } 
  { \raisebox{ #1 }[0pt][0pt]{ \ensuremath{ \scriptstyle{ #2 } } } }  
}   
\newcommandx{\yrightarrow}[4][1=\empty, 2=\empty, 4=\empty, usedefault=@]{%
  \ifthenelse{\equal{#2}{\empty}}
  { \xrightarrow{ \protect{ \yaHelper[ #4 ]{ #3 } } } } 
  { \xrightarrow[ \protect{ \yaHelper[ #2 ]{ #1 } } ]{ \protect{ \yaHelper[ #4 ]{ #3 } } } } 
}
\definecolor{darkgreen}{RGB}{0, 153, 51}
\definecolor{violet}{RGB}{112, 73, 170}
\definecolor{darkred}{RGB}{153, 0, 0}
\definecolor{darkdarkblue}{RGB}{0, 0, 102}
\definecolor{darkblue}{RGB}{153, 204, 255}
\definecolor{bluee}{RGB}{204, 230, 255}
\definecolor{bluee2}{RGB}{128, 191, 255}
\definecolor{shadow}{RGB}{82, 122, 122}
\definecolor{my_green1}{RGB}{0, 153, 0}
\definecolor{my_orange}{RGB}{255, 204, 0}
\definecolor{my_yellow}{RGB}{255, 255, 102}
\definecolor{green_pick}{RGB}{51,102,0}
\definecolor{blue_pick}{RGB}{51,51,204}
\definecolor{red_pick}{RGB}{128,0,0}
\pgfplotsset{compat=1.16}
\def\@setOxy O(#1,#2,#3)x(#4,#5,#6)y(#7,#8,#9)%
\def\tikz@plane@origin{\pgfpointxyz{#1}{#2}{#3}}%
\def\tikz@plane@x{\pgfpointxyz{#4}{#5}{#6}}%
\def\tikz@plane@y{\pgfpointxyz{#7}{#8}{#9}}%
\newcommand{\gettikzxy}[3]{%
  \tikz@scan@one@point\pgfutil@firstofone#1\relax
  \edef#2{\the\pgf@x}%
  \edef#3{\the\pgf@y}%
}
\newcommand{\R}{\mathbb{R}}
\newcommand{\N}{\mathbb{N}}
\newcommand{\Z}{\mathbb{Z}}
\newcommand{\C}{\mathbb{C}}
\newcommand{\Aa}{\mathcal{A}}
\newcommand{\BB}{\mathcal{B}}
\newcommand{\Dd}{\mathcal{D}}
\newcommand{\Hh}{\mathcal{H}}
\newcommand{\KK}{\mathcal{K}}
\newcommand{\QQ}{\mathcal{Q}}
\newcommand{\UU}{\mathscr{U}}
\newcommand{\ii}{\mathrm{i}}
\newcommand{\p}{\varphi}
\newcommand{\e}{\varepsilon}
\newcommand{\oo}{\overline}
\newcommand{\MM}{\mathbb{M}}
\newcommand{\SOT}{\scalebox{0.82}{SOT}}
\newcommand{\RE}{\mathrm{Re}}
\newcommand{\sot}{\mathop{\scalebox{0.82}{\mbox{{\rm SOT-}}}\hspace*{-1pt}\lim}}
\newcommand{\n}[1]{\|#1\|}
\newcommand{\nn}[1]{{\vert\kern-0.25ex\vert\kern-0.25ex\vert #1 
    \vert\kern-0.25ex\vert\kern-0.25ex\vert}}
\newcommand{\lnn}[1]{{\left\vert\kern-0.25ex\left\vert\kern-0.25ex\left\vert #1 
    \right\vert\kern-0.25ex\right\vert\kern-0.25ex\right\vert}}
\newcommand{\ccup}{\scalebox{0.85}{$\bigcup$}}
\renewcommand{\leq}{\leqslant}
\renewcommand{\geq}{\geqslant}
\newcommand{\cs}{{\rm C}$^\ast$}
\newcommand{\Ext}{\mathrm{Ext}}
\newcommand{\usim}{\,\raise.17ex\hbox{$\scriptstyle\mathtt{\sim}$}}
\renewcommand{\ip}[2]{\langle #1,#2\rangle}
\newtheorem{theorem}{Theorem}[section]
\newtheorem{lemma}[theorem]{Lemma}
\newtheorem{proposition}[theorem]{Proposition}
\newtheorem{corollary}[theorem]{Corollary}
\newtheorem*{theoremA}{Theorem A}
\newtheorem*{theoremB}{Theorem B}
\newtheorem*{theoremC}{Theorem C}
\theoremstyle{definition}
\newtheorem{definition}[theorem]{Definition}
\theoremstyle{remark}
\newtheorem{remark}[theorem]{Remark}
\newtheorem{example}[theorem]{Example}
\numberwithin{equation}{section}
\title[Automatic continuity of operator semigroups]{Automatic continuity of operator\\ semigroups in the Calkin algebra}
\subjclass[2020]{Primary 46L05, 46L80, Secondary 47D06}
\author{Tomasz Kochanek}
\address{Institute of Mathematics, University of Warsaw, Banacha~2, 02-097 Warsaw, Poland}
\email{tkoch@mimuw.edu.pl}
\keywords{Operator semigroup, Calkin algebra, lifting problems}
\thanks{This work has been supported by the National Science Centre grant no. 2020/37/B/ST1/01052.}
\begin{document}
\begin{abstract}
We study operator semigroups in the Calkin algebra $\QQ(\Hh)$, represented as a~subalgebra of the algebra of bounded linear operators on a~Hilbert space via one of `canonical' Calkin's representations. Using the BDF theory, we associate with any normal $C_0$-semigroup $(q(t))_{t\geq 0}$ in $\QQ(\Hh)$ an~extension $\Gamma\in\Ext(\Delta)$, where $\Delta$ is the inverse limit of certain compact metric spaces defined purely in terms of the spectrum $\sigma(A)$ of the generator of $(q(t))_{t\geq 0}$. Then we show that, in natural circumstances, if $(q(t))_{t\geq 0}$ is continuous in the strong operator topology, then it is actually uniformly continuous, although there are $C_0$-semigroups in $\QQ(\Hh)$ that are not uniformly continuous.
\end{abstract}
\maketitle


\section{Introduction and summary of the main results}
\noindent
Let $\Hh$ be an infinite-dimensional separable Hilbert space and $\QQ(\Hh)$ be the corresponding Calkin algebra, that is, the quotient $\BB(\Hh)/\KK(\Hh)$ of the algebra of bounded linear operators on $\Hh$ by the ideal of compact operators. The goal of this paper is to investigate strong and uniform continuity of operator semigroups in $\QQ(\Hh)$. Continuity in the strong operator topology is understood here via certain canonical Calkin's representations of $\QQ(\Hh)$ on a~nonseparable Hilbert space. On one hand, we characterize the situation where a~semigroup $(q(t))_{t\geq 0}\subset\QQ(\Hh)$ is a~$C_0$-semigroup in terms of an~easy topological property of a~certain compact metric space naturally associated with $(q(t))_{t\geq 0}$. On the other hand, we provide some automatic continuity-type results also under natural topological conditions. 

Evidently, both automatic continuity (see the classical monograph \cite{dales}) and lifting problems (see the recent book \cite{farah}) play a~vital role in theory of Banach algebras. Our approach is heavily based on the theory of extensions of compact metric spaces developed by Brown, Douglas and Fillmore in their famous work \cite{BDF} where they solved the lifting problem for essentially normal operators. One of our basic results (Proposition~\ref{P_spectrum}) says that with every normal $C_0$-semigroup in $(q(t))_{t\geq 0}\subset \QQ(\Hh)$ one can associate an~extension $\Gamma\in\Ext(\Delta)$ of a~certain inverse limit $\Delta=\varprojlim\Omega_n$ of compact metric spaces $\Omega_n$. More precisely, if $A$ is the infinitesimal generator, then the spectrum of the \cs-algebra $\mathrm{C}^\ast(q(2^{-n}),1_{\QQ(\Hh)})$ is homeomorphic to the inverse limit $\Delta=\varprojlim\{\Omega_n,p_n\}$, where $p_n(z)=z^2$ and
$$
\Omega_n=\oo{\exp(2^{-n}\sigma(A))}\qquad (n=0,1,2,\ldots).
$$

This procedure suggests to distinguish a~class of compact metric spaces arising as inverse limits of the same form as above, and call them {\it admissible}. The process described above can be reversed in the sense that every admissible space naturally induces a~dyadic semigroup $(q(t))_{t\in\mathbb{D}}$ in $\QQ(\Hh)$. ($\mathbb{D}$ is the set of positive dyadic rationals.)

Our first announced main result reads as follows.

\begin{theoremA}
Let $X=\varprojlim X_n$ be an admissible compact metric space \vspace*{-2pt}with $\pi_n\colon X\to X_n$ standing for the $n^{\mathrm{th}}$ projection, for each $n\in\N$. Let also $(q(t))_{t\in\mathbb{D}}$ be the dyadic semigroup induced by the zero element of $\Ext(X)$. Then, the following assertions are equivalent:

\vspace*{2mm}
\begin{enumerate}[label={\rm (\roman*)}, leftmargin=26pt]
\setlength{\itemsep}{2pt}

\item $(q(t))_{t\in\mathbb{D}}$ is strongly continuous with respect to a~fixed Calkin's representation;

\item $(q(t))_{t\in\mathbb{D}}$ is strongly continuous with respect to all Calkin's representations;

\item $\lim_{n\to\infty}\pi_n(\xi)=1$ for every $\xi\in X$.
\end{enumerate}
\end{theoremA}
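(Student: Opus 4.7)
My plan is to prove the cycle (ii)$\Rightarrow$(i)$\Rightarrow$(iii)$\Rightarrow$(ii); the first implication is trivial, so the content lies in the other two. Throughout I identify $\mathrm{C}^*(q(2^{-n}),1)\subset\QQ(\Hh)$ with $C(X)$ via the Gelfand transform, under which $q(t)$, for $t=m\cdot 2^{-n}$ with odd $m$, corresponds to the continuous function $\xi\mapsto\pi_n(\xi)^m$ on $X$.

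For (iii)$\Rightarrow$(ii) I need two preliminaries. First, the C$^*$-identity for the normal element $q(2^{-n})$, combined with $q(2^{-n})^2=q(2^{-n+1})$, gives $\|q(2^{-n})\|=\|q(1)\|^{2^{-n}}$ and hence the uniform bound $\|q(t)\|\leq\max(1,\|q(1)\|)$ for all $t\in\mathbb{D}\cap(0,1]$. Second, condition (iii) forces pointwise convergence $q(t)(\xi)\to 1$ on $X$ as $t\to 0^+$ through $\mathbb{D}$: once $\pi_n(\xi)$ lies in a sufficiently small neighborhood of $1$ (which holds for $n\geq N_\xi$ by (iii)), the inductive square-root relation must agree with the principal branch, so that $\pi_n(\xi)=|\pi_0(\xi)|^{2^{-n}}\exp(i\cdot 2^{-(n-N_\xi)}\theta_{N_\xi})$ for $n\geq N_\xi$, and therefore $q(m\cdot 2^{-n})(\xi)=|\pi_0(\xi)|^{t}\exp(it\cdot 2^{N_\xi}\theta_{N_\xi})\to 1$. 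Given these, for any Calkin's representation $\rho$ on a Hilbert space $\mathcal{K}$, the spectral measure $E$ of the non-degenerate restriction $\rho|_{C(X)}$ yields
\[
\|\rho(q(t))v-v\|^2=\int_X|q(t)-1|^2\,d\langle Ev,v\rangle\longrightarrow 0
\]
for every $v\in\mathcal{K}$ by bounded convergence, establishing (ii).

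For (i)$\Rightarrow$(iii), the triviality of the extension furnishes a $*$-monomorphism $\sigma:C(X)\to\BB(\Hh)$ lifting the inclusion $C(X)\hookrightarrow\QQ(\Hh)$. Since the joint essential spectrum of the commuting normal family $\{\sigma(q(2^{-n}))\}_{n\in\N}$ equals $X$, for each $\xi\in X$ a diagonal argument produces a sequence $(u_k)_{k\in\N}$ of unit vectors in $\Hh$, weakly convergent to zero, satisfying $\|(\sigma(q(2^{-n}))-\pi_n(\xi))u_k\|\to 0$ as $k\to\infty$, for every $n\in\N$. Inside the canonical Calkin's representation on the ultrapower $\prod_\mathcal{U}\Hh$, the class $[u_k]$ is a unit vector annihilated by every compact operator on which $q(2^{-n})$ acts as the scalar $\pi_n(\xi)$; SOT-continuity of $(q(t))$ in that representation then immediately forces $\pi_n(\xi)\to 1$.

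The main obstacle is that (i) posits strong continuity in only one fixed Calkin's representation $\rho_0$, which a priori need not be the ultrapower one. My plan for bridging is to exploit the richness of any canonical Calkin's representation: for each $\xi\in X$ one can still locate, inside $\mathcal{K}_0$, a weakly null sequence $(v_k)$ of unit vectors asymptotically realizing the character $\chi_\xi$ on $C(X)$, and then a diagonal extraction (choosing $k=k(n)$ growing slowly enough that the SOT-continuity hypothesis still gives $\rho_0(q(2^{-n}))v_{k(n)}\to v_{k(n)}$) combined with the triangle inequality
\[
|\pi_n(\xi)-1|\leq\|(\rho_0(q(2^{-n}))-1)v_{k(n)}\|+\|(\rho_0(q(2^{-n}))-\pi_n(\xi))v_{k(n)}\|
\]
yields (iii). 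Should controlling the two rates simultaneously prove delicate, the equivalence can alternatively be closed by first proving (i)$\Rightarrow$(iii) for the ultrapower representation, then invoking (iii)$\Rightarrow$(ii) to obtain SOT-continuity in every Calkin's representation, whence the a priori dependence of (i) on the chosen representation disappears ex post facto.
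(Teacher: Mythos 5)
Your proof is correct, and for the substantive implication (iii)$\Rightarrow$(ii) it takes a genuinely different and more economical route than the paper. The paper first characterizes membership of $\Theta$ in $\Ext_{C_0,\gamma}(X)$ for all Calkin representations by \emph{weak} convergence in $\ell_\infty$ of the sequences $\big(|1-\pi_{L_n}(\xi_k)^{S_n}|\big)_k$, reformulates this via the Dunford--Schwartz quasi-uniform convergence criterion, and then derives quasi-uniform convergence from (iii) by combining the rate estimate $|1-\pi_n(\xi)|=O(2^{-n})$ with a compactness/finite-subcover argument. You instead restrict each vector state of $\gamma\circ\tau$ to the commutative algebra $C(X)$, represent it by a regular Borel measure on the compact metric space $X$ via Riesz--Markov, and apply dominated convergence; this needs only the pointwise convergence $q(t)(\xi)\to 1$ on $X$ (your principal-branch computation, which is exactly the content of the paper's Lemma~\ref{rate_L}) together with the uniform norm bound. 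Both arguments ultimately rest on the same principle --- bounded pointwise convergent sequences in $C(K)$ converge weakly --- but you apply it on $K=X$, where the relevant functionals are genuine countably additive measures, rather than on $K=\beta\N$, which is why your version is shorter; the paper's extra machinery buys a sharper \emph{iff} characterization (Proposition~\ref{C0_i_iii_T}(ii), Corollary~\ref{DS_corollary}) that is reused elsewhere. Your (i)$\Rightarrow$(iii) coincides in substance with the paper's Proposition~\ref{C0_i_iii_T}(i). Two of your worries are unfounded: in this paper ``Calkin's representation'' means by definition a $\UU$-Calkin (ultrafilter) representation, so the fixed representation in (i) is already of the form you need; and no diagonal extraction or balancing of rates is required, because the class $[(u_k)]_\sim\in\mathbb{H}$ of a weakly null joint approximate eigenvector sequence is an \emph{exact} joint eigenvector ($\gamma(q(2^{-n}))[(u_k)]_\sim=\pi_n(\xi)[(u_k)]_\sim$, the ultrafilter limit absorbing the $k$-dependence), whence SOT-continuity gives $|\pi_n(\xi)-1|=\|\gamma(q(2^{-n}))[(u_k)]_\sim-[(u_k)]_\sim\|\to 0$ directly. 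Finally, to pass from SOT-convergence at $t=0$ to strong continuity of the whole dyadic semigroup (and its extension to $[0,\infty)$) you should invoke the uniform bound together with the argument of Lemma~\ref{cont1_L}, but you have all the ingredients in hand.
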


In order to formulate our results on automatic continuity we introduce a~rather technical geometric condition which is, however, not difficult to verify in concrete situations. Namely, given any closed set $Z\subset\C$ with 
\begin{equation}\label{Z_boundaries}
-\infty<\eta=\inf_{z\in Z}\RE\,z\leq\sup_{z\in Z}\RE\,z=\zeta<+\infty
\end{equation}
we define the vertical sections $S_t=\{u\in\R\colon t+\ii u\in Z\}$ and, for any $t\in [\eta,\zeta]$ and $n\in\N$, consider the following condition:
$$
\big(S_t+2^{n+1}\pi\Z\big)\cap\big(S_t-2^n\pi+2^{n+1}\pi\Z   \big)\neq\varnothing\leqno(\star)
$$
and define
$$
M_t=\{n\in\N\colon \mathrm{(}\!\star\!\mathrm{)}\mbox{ holds true}\,\}.
$$
(Notice that $n\in M_t$ if and only if there is an~odd integer $k$ with $2^nk\pi\in S_t-S_t$.) Then, our results read as follows.

\begin{theoremB}
Let $(q(t))_{t\geq 0}\subset\QQ(\Hh)$ be a $C_0$-semigroup of normal operators with generator $A$, where the strong continuity is understood with respect to Calkin's representation, and let $Z=\sigma(A)$. Assume that $Z$ satisfies the following hypotheses:

\vspace*{2mm}
\begin{itemize}[leftmargin=34pt]
\setlength{\itemsep}{3pt}
\item[{\rm (H$_1$)}] $\exp(2^{-n}Z)$ is closed for sufficiently large $n\in\N$; 

\item[{\rm (H$_2$)}] the set $\ccup_{\eta\leq t\leq\zeta}\,M_t$ is finite,
\end{itemize}
and that the induced admissible compact metric space $X$ satisfies $\Ext(X)=0$. Then $(q(t))_{t\geq 0}$ is uniformly continuous.
\end{theoremB}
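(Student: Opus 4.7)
The plan is to apply Theorem~A by way of the hypothesis $\Ext(X)=0$ and then upgrade its pointwise conclusion to norm convergence along the dyadic sequence $(2^{-n})$. First, by Proposition~\ref{P_spectrum}, the $\mathrm{C}^*$-subalgebra of $\QQ(\Hh)$ generated by $\{q(2^{-n}):n\in\N\}\cup\{1_{\QQ(\Hh)}\}$ has Gelfand spectrum $X=\varprojlim\{\Omega_n,p_n\}$, and the inclusion into $\QQ(\Hh)$ induces an extension $\Gamma\in\Ext(X)$. Since $\Ext(X)=0$, $\Gamma$ is trivial, so the dyadic family $(q(t))_{t\in\mathbb{D}}$ coincides, up to the unitary freedom in Calkin's representation, with the dyadic semigroup induced by the zero element of $\Ext(X)$ treated in Theorem~A. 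The $C_0$-hypothesis on $(q(t))_{t\geq 0}$ forces strong continuity of its dyadic restriction, so Theorem~A yields condition~(iii): $\pi_n(\xi)\to 1$ for every $\xi\in X$.

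The core of the proof is then to establish the norm statement $\|q(2^{-n})-1_{\QQ(\Hh)}\|\to 0$. Since $q(2^{-n})$ is normal with spectrum $\Omega_n$, this norm equals $\max_{w\in\Omega_n}|w-1|$, so the goal is the Hausdorff limit $\Omega_n\to\{1\}$. By (H$_1$) we have $\Omega_n=\exp(2^{-n}Z)$ for $n\geq n_0$, so every point of $\Omega_n$ takes the form $\exp(2^{-n}z)$ for some $z\in Z$. Arguing by contradiction, suppose there exist a subsequence $n_k\to\infty$ and points $z_k\in Z$ with $|\exp(2^{-n_k}z_k)-1|\geq\varepsilon$; since $\RE z_k\in[\eta,\zeta]$ is bounded, necessarily $|\IM z_k|\to\infty$. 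By a diagonal extraction, choose a subsequence along which $\RE z_k\to t^*$ and, for every $m\in\N$, $\exp(\ii 2^{-m}\IM z_k)\to\omega_m$ on the unit circle; the tuple $\xi^*:=(e^{2^{-m}t^*}\omega_m)_m$ is then a coherent element of $X$. Hypothesis (H$_2$) enters decisively here: the finiteness of $\bigcup_{\eta\leq t\leq\zeta} M_t$ obstructs the arithmetic coincidences $S_t-S_t\ni (\mathrm{odd})\cdot 2^n\pi$ that would otherwise let the $\omega_m$ collapse onto the lattice $2^{m+1}\pi\ii\Z$ and force $\xi^*$ to become the trivial sequence $1$. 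The diagonal extraction can thus be arranged so that $\pi_{n_k}(\xi^*)$ stays bounded away from $1$, contradicting~(iii).

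Once $\|q(2^{-n})-1\|\to 0$ is known, the normal functional calculus gives $q(t)=q(2^{-n})^{2^n t}$ for $t\in[0,2^{-n}]$, and the elementary bound $|w^{\alpha}-1|\leq C\alpha|w-1|$ valid for $w$ near $1$ and $\alpha\in[0,1]$ yields $\|q(t)-1_{\QQ(\Hh)}\|\to 0$ as $t\to 0^{+}$. For a $C_0$-semigroup, this norm continuity at the origin is equivalent to uniform continuity on $[0,\infty)$.

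I expect the principal obstacle to be the construction of the diagonal coherent limit $\xi^*$ in the second paragraph: the subsequence must be engineered so that (H$_2$) genuinely prevents the trivial collapse $\omega_m\equiv 1$, and this is the point at which the full combinatorial content of the hypothesis is invoked.
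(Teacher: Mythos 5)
Your overall skeleton matches the paper's: use $\Ext(X)=0$ together with Proposition~\ref{P_spectrum} and Theorem~A to get the pointwise convergence $\pi_n(\xi)\to 1$ on $X$, and then reduce uniform continuity of the semigroup to the uniform convergence $\sup_{\xi\in X}\abs{1-\pi_n(\xi)}\to 0$ (equivalently, by normality and the isometry of the Busby invariant, $\n{q(2^{-n})-1}\to 0$). The first and last steps are fine. But the middle step --- upgrading pointwise to uniform convergence using (H$_1$) and (H$_2$) --- is the entire content of the theorem, and your argument for it has a genuine gap that you yourself flag. The compactness/diagonal extraction does not close: from $z_k\in Z$ with $\abs{\exp(2^{-n_k}z_k)-1}\geq\e$ you obtain coherent points $\xi^{(k)}=(\exp(2^{-m}z_k))_{m}\in X$ and a limit point $\xi^\ast$, but the known badness of $\xi^{(k)}$ sits in the coordinate $n_k$, which travels with $k$, while $\pi_{n}(\xi^\ast)=\lim_k\exp(2^{-n}z_k)$ is a limit taken at a \emph{fixed} coordinate $n$. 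Nothing forces $\pi_{n_k}(\xi^\ast)$ to stay away from $1$; indeed $\pi_n(\xi^\ast)\to 1$ automatically since $\xi^\ast\in X$. This is not a technicality: the paper's Example~\ref{c0not} exhibits an admissible $X$ (violating (H$_2$)) on which $\pi_n\to\mathbf{1}$ pointwise but not uniformly, so no soft compactness argument can work, and your description of how (H$_2$) "prevents the $\omega_m$ from collapsing to $1$" misidentifies the mechanism --- the danger is not that $\xi^\ast$ is trivial but that the failure of uniformity escapes along the coordinate index.

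What is actually needed is the paper's Lemma~\ref{point_unif_L}. Its mechanism is different from yours: the moduli converge uniformly for free, since $e^{2^{-n}\eta}\leq\abs{\pi_n(\xi)}\leq e^{2^{-n}\zeta}$ once one knows $\eta>-\infty$; the issue is entirely in the arguments $\mathrm{Arg}\,\pi_n(\xi)$, where the only freedom is the choice of square-root branch at each stage. Under (H$_1$) one has $X_n=\exp(2^{-n}Z)$ for large $n$, and a direct computation shows that the circle section $\mathcal{C}_n(t)=X_n\cap\{\abs{z}=e^{2^{-n}t}\}$ contains an antipodal pair if and only if $n\in M_t$, i.e.\ condition $(\star)$ holds; thus (H$_2$) says that for all $n\geq n_1$ the sets $X_n$ contain no antipodal pairs. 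Since the two square roots of $\pi_n(\xi)$ are antipodal and $\pi_{n+1}(\xi)$ must lie in $X_{n+1}$, this pins down the branch uniformly in $\xi$ and yields $\mathrm{Arg}\,\pi_n(\xi)\to 0$ uniformly (compare the halving of arguments in the proof of Lemma~\ref{rate_L}). You should replace your second paragraph by this argument (or an equivalent one that genuinely consumes $(\star)$); as written, the proposal proves only the pointwise statement, which by itself does not suffice.
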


Another version of the above automatic continuity result, which does not require any condition on the extension group, reads as follows.

\begin{theoremC}
Let $(Q(t))_{t\geq 0}\subset\BB(\Hh)$ be a $C_0$-semigroup of normal operators with generator $T$ such that $\sigma_{\mathrm{ess}}(T)=\sigma(T)$, and let $q(t)=\pi Q(t)$ for $t\geq 0$. If the semigroup $(q(t))_{t\geq 0}$ is strongly continuous with respect to Calkin's representation, $A$ is its generator, and conditions {\rm (H$_1$)} and {\rm (H$_2$)} are satisfied for $Z=\sigma(A)$, then $(q(t))_{t\geq 0}$ is uniformly continuous. 
\end{theoremC}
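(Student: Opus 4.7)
The plan is to reduce Theorem~C to Theorem~B by using the hypothesis $\sigma_{\mathrm{ess}}(T)=\sigma(T)$ to show that the extension $\Gamma\in\Ext(\Delta)$ associated with $(q(t))_{t\geq 0}$ in Proposition~\ref{P_spectrum} is trivial, even though the ambient group $\Ext(X)$ need not vanish. Once $\Gamma=0$ is in hand, the argument from Theorem~B should apply, since the hypothesis $\Ext(X)=0$ in that theorem is presumably invoked only to force the vanishing of the particular extension $\Gamma$ attached to the semigroup.

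First, I would invoke spectral theory for the normal $C_0$-semigroup $(Q(t))_{t\geq 0}$: its generator $T$ is a (possibly unbounded) normal operator, and Borel functional calculus gives $Q(t)=\exp(tT)$. The spectral mapping theorem applied to $z\mapsto e^{tz}$ yields $\sigma(Q(t))=\oo{\exp(t\sigma(T))}$, and the analogous identity holds for the essential spectrum, since for a normal operator $\sigma_{\mathrm{ess}}$ is precisely the support of the spectral measure with isolated atoms of finite spectral multiplicity removed. Combined with the standard identity $\sigma(q(t))=\sigma_{\mathrm{ess}}(Q(t))$ and with the formula $\Omega_n=\oo{\exp(2^{-n}\sigma(A))}=\sigma(q(2^{-n}))$ provided by Proposition~\ref{P_spectrum}, the hypothesis $\sigma_{\mathrm{ess}}(T)=\sigma(T)$ collapses all the relevant spectra:
\[
\Omega_n \;=\; \sigma(Q(2^{-n})) \;=\; \sigma_{\mathrm{ess}}(Q(2^{-n})) \qquad (n\in\N).
\]

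Second, I would deduce $\Gamma=0$. Each $Q(2^{-n})$ is a normal operator on $\Hh$ whose spectrum equals its essential spectrum $\Omega_n$, so by the Brown--Douglas--Fillmore triviality criterion for essentially normal operators, the class of the extension $\mathrm{C}^\ast(q(2^{-n}),1_{\QQ(\Hh)})$ in $\Ext(\Omega_n)$ is the zero element. Since $Q(2^{-(n+1)})^{2}=Q(2^{-n})$, these trivializations are compatible with the bonding maps $p_n(z)=z^2$, and the functoriality of $\Ext$ with respect to inverse limits of compact metric spaces gives $\Gamma=0\in\Ext(\Delta)$. Feeding this into the argument behind Theorem~B, conditions (H$_1$) and (H$_2$) then upgrade the strong continuity hypothesis to uniform continuity, which is the desired conclusion.

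The main obstacle, I expect, lies in Step~1: one must carefully legitimize the spectral mapping identity $\sigma_{\mathrm{ess}}(Q(t))=\oo{\exp(t\sigma_{\mathrm{ess}}(T))}$ for an unbounded normal generator and verify that the spectrum of $A$, obtained from the Calkin-algebra semigroup via the Hille--Yosida theory on the nonseparable representing space, really matches $\sigma_{\mathrm{ess}}(T)$ through the same exponential. Once this spectral dictionary is cleanly established, the BDF argument of Step~2 and the reduction to Theorem~B are essentially formal.
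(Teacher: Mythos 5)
Your overall strategy (show that the particular extension $\Gamma$ attached to the semigroup is trivial, then run the Theorem~B machinery) is exactly the paper's, and your Step~1 is essentially what the paper does: it notes $\sigma(q(2^{-n}))=\sigma_{\mathrm{ess}}(Q(2^{-n}))$ and uses Nagy's spectral mapping theorem together with $\sigma(T)=\sigma_{\mathrm{ess}}(T)$ to conclude $\sigma(q(2^{-n}))=\sigma(Q(2^{-n}))$ for all $n$. The problem is your Step~2. The inference ``each stage is trivial in $\Ext(\Omega_n)$, the trivializations are compatible with the bonding maps, hence by functoriality of $\Ext$ with respect to inverse limits $\Gamma=0$ in $\Ext(\Delta)$'' is not valid: $\Ext$ is \emph{not} continuous with respect to inverse limits. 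The induced map $P\colon\Ext(\varprojlim X_n)\to\varprojlim\Ext(X_n)$ is surjective but its kernel is $\varprojlim^{(1)}\Ext_1(X_n)$, which need not vanish --- this is precisely Milnor's exact sequence, quoted as Theorem~\ref{milnor_thm} in the paper. So knowing that $q_{n\ast}\Gamma=0$ for every $n$ (which is all that stage-wise BDF triviality gives you) does not by itself force $\Gamma=0$; the whole point of Theorem~C is to avoid assuming $\Ext(X)=0$, and your argument quietly reintroduces an assumption of that kind.

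The paper closes this gap with the `lifting lemma' (Lemma~\ref{lifting_L}), which exploits the fact that you have strictly more than stage-wise triviality: you have a single coherent family of normal lifts $Q(2^{-n})$ with $Q(2^{-(n+1)})^2=Q(2^{-n})$ and $\sigma(Q(2^{-n}))=\sigma(q(2^{-n}))=X_n$. From this one builds one global $^\ast$-homomorphic section $\rho\colon C(X)\to\BB(\Hh)$ directly: define $\rho(P\circ\pi_n)=P(Q(2^{-n}),Q(2^{-n})^\ast)$ on the dense $^\ast$-subalgebra of cylinder polynomials, check it is isometric because the spectra of $Q(2^{-n})$ and $q(2^{-n})$ coincide, and extend by Stone--Weierstrass. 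That produces a splitting of the extension of $C(X)$ itself, hence $\Gamma=\Theta$, with no appeal to $\lim^{(1)}$ vanishing. Your proposal could be repaired along exactly these lines, but as written the passage from stage-wise triviality to triviality over the inverse limit is a genuine gap. The remainder (Theorem~A plus Lemma~\ref{point_unif_L} turning pointwise convergence $\pi_n\to\mathbf{1}$ into uniform convergence under (H$_1$), (H$_2$)) matches the paper.
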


It is worth mentioning that the characterization given in Theorem~A allows us to construct an~example of a~$C_0$-semigroup (with respect to all Calkin's representations) in $\QQ(\Hh)$ that is not uniformly continuous (see Example~\ref{c0not}).

\section{Preliminaries and tools}
\subsection{Connections with the BDF theory}

Here, we shall collect some necessary basic facts from the Brown--Douglas--Fillmore theory of extensions, and explain its relation to our results. For more details on the BDF theory, the reader is referred to the seminal paper \cite{BDF} or to nice expositions of that theory in \cite[Ch.~VII]{blackadar-K} or \cite[Ch.~IX]{davidson}.

Let $X$ be a compact metric space. By an~{\it extension} of $C(X)$ ({\it by} $\KK(\Hh)$) we mean any pair $(\Aa,\p)$, where $\Aa$ is a~\cs-subalgebra of $\BB(\Hh)$ containing the compact operators and the~identity operator, and $\p\colon\Aa\to C(X)$ is a~$^\ast$-homomorphism such that
\begin{equation*}
\begin{tikzcd}
0 \arrow[r] & \KK(\Hh) \arrow[r, "\iota"] & \mathcal{A} \arrow[r, "\p"] & C(X) \arrow[r] & 0
\end{tikzcd}
\end{equation*}
is an exact sequence, where $\iota$ is the inclusion map. To every extension one can associate the~so-called {\it Busby invariant} which is a~unital $^\ast$-monomorphism $\tau\colon C(X)\to\QQ(\Hh)$ defined as $\tau=\pi\p^{-1}$, which is the inverse of the~identification $\mathcal{A}/\KK(\Hh)\cong C(X)$. Conversely, any such $^\ast$-monomorphism gives rise to an~extension $(\pi^{-1}\tau(C(X)),\tau^{-1}\pi)$. In this setting, two extensions of $C(X)$ are called {\it equivalent} if the associated Busby invariants $\tau_1$ and $\tau_2$ satisfy $\tau_2=\pi(U)^\ast\tau_1\pi(U)$ for some unitary $U\in\BB(\Hh)$. We write $[\tau]$ for the extension (equivalence class) generated by a~unital $^\ast$-monomorphism $\tau\colon C(X)\to\QQ(\Hh)$.

Of fundamental importance is the fact that the~collection $\Ext(X)$ of all equivalence classes of extensions of $C(X)$ forms a~group when equipped with an~operation $+$ defined in terms of $^\ast$-monomorphisms $C(X)\to\QQ(\Hh)$ as $[\tau_1]+[\tau_2]=[\tau_1\oplus\tau_2]$. Here, we use an~isomorphism $\Hh\oplus\Hh\cong\Hh$ which allows us to identify the matrix algebra $\MM_2(\QQ(\Hh))$ with $\QQ(\Hh)$, as $\MM_2(\KK(\Hh))$ is mapped onto $\KK(\Hh)$. The zero element of that group can be constructed as follows. Take any infinite direct sum decomposition $\Hh=\bigoplus_{i=1}^\infty\Hh_i$, where each $\Hh_i$ is infinite-dimensional, pick a~countable dense subset $\{\xi_{i}\colon i\in\N\}$ of $X$ and define $\sigma\colon C(X)\to\BB(\Hh)$ by 
\begin{equation}\label{sigma_neutral}
\sigma(g)=\bigoplus_{i=1}^\infty g(\xi_{i})I_i,
\end{equation}
where $I_i$ is the identity operator on $\Hh_i$. Plainly, there are no nonzero compact operators in the range of $\sigma$, which implies that $\pi\sigma\colon C(X)\to\QQ(\Hh)$ is a~$^\ast$-monomorphism admitting the section $\sigma$, hence it determines the trivial extension of $C(X)$. That all trivial extensions are equivalent follows from the celebrated Weyl-von~Neumann--Berg theorem; see \cite{berg} and \cite[Thms.~II.4.6 and IX.2.1]{davidson}. Throughout this paper, we denote by $\Theta\in\Ext(X)$ the zero element in the~group of extensions of $X$, i.e. $\Theta=[\sigma]$, where $\sigma$ is given as in \eqref{sigma_neutral}. The property that each $[\tau]\in\Ext(X)$ has an~inverse, that is, there exists a~Busby invariant $\sigma$ with $[\tau\oplus\sigma]=\Theta$, is based on the~lifting property of positive unital maps on $C(X)$ and Naimark's dilation theorem (see \cite[Thm.~IX.5.1]{davidson}).

Given two compact metric spaces $X$ and $Y$, and a~continuous map $f\colon X\to Y$, there is an~induced map $f_\ast\colon\Ext(X)\to\Ext(Y)$ defined as
$$
f_\ast(\tau)(g)=\tau(g\circ f)\oplus \sigma(g)\quad (g\in C(Y)),
$$
where $\sigma$ is any $^\ast$-monomorphism corresponding to the trivial extension of $C(Y)$. We add the second direct summand in order to guarantee that the resulting map $f_\ast(\tau)$ is injective. One can verify that $(fg)_\ast=f_\ast g_\ast$ whenever these compositions make sense.

Recall that for any compact metric space $X$, the {\it cone} $CX$ over $X$ is obtained from $X\times I$ by collapsing $X\times\{0\}$ to a~single point, where $I=[0,1]$. The {\it suspension} $SX$ is obtained from $X\times I$ by collapsing $X\times\{0\}$ and $X\times\{1\}$ to two distinct points.

The extension functor is defined for ranks $q\leq 1$ by $\Ext_q(X)=\Ext(S^{1-q}X)$. It was shown in \cite[\S 6]{BDF} that, analogously to Bott's periodicity in $K$-theory, there exist isomorphisms
$$
\mathrm{Per}_\ast\colon \Ext_{q-2}(X)\xrightarrow[\phantom{xx}]{}\Ext_q(X)\quad (r\leq 1).
$$
This allows us to extend the definition of $\Ext$ to all integer dimensions:
$$
\Ext_q(X)=\left\{\begin{array}{ll}
\Ext(X) & \mbox{if }q\mbox{ is odd},\\
\Ext(SX) & \mbox{if }q\mbox{ is even}.
\end{array}\right.
$$
Since every continuous map $f\colon X\to Y$ naturally induces a~map $Sf\colon SX\to SY$ by appropriately quotienting $f\times\mathrm{id}\colon X\times I\to Y\times I$, there is an~induced homomorphism $f_\ast\colon \Ext_q(X)\to\Ext_q(Y)$, for any $q\in\Z$. We also define extensions of compact pairs by the obvious formula $\Ext_q(X,A)=\Ext_q(X/A)$. Therefore, for any admissible map $f\colon (X,A)\to (Y,B)$ between compact pairs (i.e. $f$ is continuous and $f(A)\subseteq B$), there is an~induced homomorphism $f_\ast\colon \Ext_q(X,A)\to\Ext_q(Y,B)$.

Suppose $\{X_n,p_n\}_{n=0}^\infty$ is an~inverse system of compact metric spaces. Let $X=\varprojlim X_n$ and $q_n\colon X\to X_n$ stand for the coordinate maps, for $n\in\N_0$, so that $p_nq_{n+1}=q_n$. Hence, we have another inverse system of groups $\{\Ext(X_n),p_{n\ast}\}_{n=0}^\infty$. Since $p_{n\ast}q_{(n+1)\ast}=q_{n\ast}$, we can define an~{\it induced map} 
$$
P\colon \Ext(X)\to\varprojlim\Ext(X_n),\quad P(\tau)=(q_{n\ast}\tau)_{n=0}^\infty. 
$$
By \cite[Thm.~8.4]{BDFu}, the induced map is always surjective, but in general not injective. However, Milnor \cite{milnor} showed that for any homology theory satisfying the Steenrod axioms, except the dimension axiom (see \cite{ES}), one can build an~exact sequence which measures the lack of continuity of the $\Ext$-functor with respect to inverse limit (see \cite[\S 5]{Dou}).
\begin{theorem}[see {\cite[Thm.~4]{milnor}} and {\cite[Cor.~7.4]{BDF}}]\label{milnor_thm}
For any inverse system $\{X_n\}$ of compact metric spaces, and any $k\in\Z$, there exists an~exact sequence
\begin{equation*}
\begin{tikzcd}
0 \arrow[r] & \varprojlim{}^{(1)}\Ext_{k+1}(X_n) \arrow[r] & \Ext_k(\varprojlim X_n) \arrow[r, "P"] & \varprojlim\Ext_k(X_n) \arrow[r] & 0
\end{tikzcd}
\end{equation*}
where $\varprojlim{}^{(1)}$ is the first derived functor of inverse limit.
\end{theorem}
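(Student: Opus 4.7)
The plan is to treat $\{\Ext_k\}_{k\in\Z}$ as a generalized homology theory on compact metric pairs satisfying the Steenrod--Milnor axioms, and then invoke Milnor's abstract $\varprojlim{}^{(1)}$-machinery. Since the surjectivity of the induced map $P$ is already contained in \cite[Thm.~8.4]{BDFu} as cited above, the real content is the identification of $\ker P$ with $\varprojlim{}^{(1)}\Ext_{k+1}(X_n)$.

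First, I would verify that $\Ext_\ast$ satisfies the Steenrod--Milnor axioms. Homotopy invariance, the long exact sequence of a compact pair (available via the convention $\Ext_q(X,A)=\Ext_q(X/A)$ and the cofibre sequence), excision, and the Bott-type isomorphism $\mathrm{Per}_\ast\colon\Ext_{q-2}(X)\xrightarrow{\cong}\Ext_q(X)$ are all in place from \cite[\S 6]{BDF}. The crucial remaining axiom is the \emph{cluster} (strong wedge) axiom: for every sequence of pointed compact metric spaces $(Y_n,y_n)$, the natural map
$$
\Ext_k\!\left(\bigvee_{n=1}^\infty Y_n\right)\longrightarrow \prod_{n=1}^\infty \Ext_k(Y_n)
$$
must be an isomorphism. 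In the BDF setting this expresses the fact that a Busby invariant $\tau\colon C(\bigvee_n Y_n)\to\QQ(\Hh)$ can be conjugated by a unitary into an orthogonal direct sum of Busby invariants on the summands $C(Y_n)$, using a Hilbert space decomposition $\Hh=\bigoplus \Hh_n$ in the spirit of \eqref{sigma_neutral}.

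With these axioms in hand, I would follow Milnor's mapping microscope construction. Set $Y=\prod_{n\geq 0}X_n$ and let $s\colon Y\to Y$ be the map sending $(x_n)_{n\geq 0}$ to $(p_n(x_{n+1}))_{n\geq 0}$; the equalizer of $s$ and the identity is precisely $\varprojlim X_n$. Converting this description into a telescope-type compact pair produces a Puppe-style long exact sequence which, after invoking the cluster-axiom identification $\Ext_k(Y)=\prod_n\Ext_k(X_n)$, takes the shape
$$
\cdots\to\Ext_{k+1}\!\left(\varprojlim X_n\right)\to \prod_n\Ext_{k+1}(X_n)\xrightarrow{\mathrm{id}-s_\ast}\prod_n\Ext_{k+1}(X_n)\to\Ext_k\!\left(\varprojlim X_n\right)\to\cdots
$$
Since $\ker(\mathrm{id}-s_\ast)=\varprojlim\Ext_{k+1}(X_n)$ and the cokernel of $\mathrm{id}-s_\ast$ is by definition $\varprojlim{}^{(1)}\Ext_{k+1}(X_n)$, breaking the six-term sequence yields the announced short exact sequence. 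Bott periodicity $\Ext_{q-2}\cong\Ext_q$ ensures that the formula makes sense for all $k\in\Z$ once the isomorphism $\Ext_q(X)=\Ext(X)$ or $\Ext(SX)$ is used to interpret the indexing.

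The main obstacle is the cluster axiom. It is not purely formal: one has to produce, and then prove uniqueness up to equivalence of, a Busby invariant on a countable wedge from coordinate-wise data, which requires a delicate Hilbert-space argument combining the Weyl--von~Neumann--Berg theorem with the identification $\MM_2(\QQ(\Hh))\cong\QQ(\Hh)$ iterated to countably many summands. Once this is secured, the identification of kernel and cokernel with $\varprojlim$ and $\varprojlim{}^{(1)}$ is entirely algebraic, and the final sequence is extracted exactly as in \cite[Thm.~4]{milnor}; the specific Ext-version we quote is \cite[Cor.~7.4]{BDF}.
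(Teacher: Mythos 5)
This statement is quoted in the paper from Milnor and from BDF without proof, so the only thing to assess is whether your sketch would actually constitute a proof. Your overall strategy --- verify that $\Ext_\ast$ is a Steenrod homology theory on compact metric pairs, isolate the cluster (strong wedge) axiom as the one genuinely operator-theoretic input, and then run Milnor's $\varprojlim{}^{(1)}$ machinery --- is the standard and correct route, and your remarks about surjectivity of $P$ coming from the earlier BDF work and about Bott periodicity handling the indexing are fine.

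However, the central geometric step is wrong as written. You set $Y=\prod_{n\geq 0}X_n$, present $\varprojlim X_n$ as the equalizer of the shift $s$ and the identity on $Y$, and then ``invoke the cluster-axiom identification $\Ext_k(Y)=\prod_n\Ext_k(X_n)$.'' The cluster axiom, which you state correctly, concerns the strong wedge $\bigvee_n Y_n$ (the compactified one-point union), not the Cartesian product; no homology theory converts infinite products of spaces into products of groups (already $H_2(S^1\times S^1)\neq H_2(S^1)\times H_2(S^1)$ for ordinary homology), and the equalizer presentation inside $\prod_n X_n$ does not fit into any cofibre sequence to which a Puppe argument applies. The construction that actually works is Milnor's compactified infinite mapping cylinder: form $T$ as the union of the mapping cylinders $M(p_n)$ glued end to end together with $\varprojlim X_n$ adjoined as the set of limit points, cover $T$ by the closures of the even-indexed and odd-indexed blocks of cylinders, and observe that each of these two pieces, as well as their intersection, has the $\Ext$-theory of a cluster of the $X_n$ (this is where the cluster axiom enters). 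The resulting Mayer--Vietoris sequence has connecting homomorphism $\mathrm{id}-s_\ast$ on $\prod_n\Ext_{k+1}(X_n)$, whose kernel and cokernel are $\varprojlim\Ext_{k+1}(X_n)$ and $\varprojlim{}^{(1)}\Ext_{k+1}(X_n)$ exactly as you say; one must also check that $\Ext_\ast(T)\cong\Ext_\ast(X_0)$ via the projection, which is a second (routine) use of the telescope structure. With the product-plus-equalizer step replaced by this telescope argument, your outline goes through.
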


\subsection{Admissible compact metric spaces}
In this paper, we shall be concerned with extensions of some special projective limits of compact subsets of the complex plane. Recall that the projective (inverse) limit of an~inverse system $\{X_n,f_n\}_{n\geq 0}$, that is, a~sequence of topological spaces and continuous maps $f_n\colon X_{n+1}\to X_n$, is defined as
$$
\varprojlim X_n=\Big\{\mathbf{x}=(x_n)_{n=0}^\infty\in \prod_{n=0}^\infty X_n\colon f_n(x_{n+1})=x_n\,\,\,\mbox{for }n\geq 0\Big\}.
$$
We denote by $\pi_n\colon\varprojlim X_n\to X_n$ the projection onto the~$n^{\mathrm{th}}$ coordinate, that is, $\pi_n(\mathbf{x})=x_n$ for $\mathbf{x}=(x_n)_{n=0}^\infty$. The topology on $\varprojlim X_n$ is the weakest topology under which all the maps $\pi_n$ are continuous. 

As we will see, certain inverse limits of compact metric spaces arise naturally when considering operator semigroups indexed by positive dyadic rational numbers. We denote by $\mathbb{D}$ the set of such numbers, i.e. $\mathbb{D}=\{k2^{-m}\colon k,m\in\N\}$. The following observation will be used several times in the sequel. Namely, if $(q(2^{-n}))_{n\geq 0}$ is a~sequence of elements of any \cs-algebra $\mathcal{A}$ such that \vspace*{-1pt} $q(2^{-(n+1)})^2=q(2^{-n})$ for each $n=0,1,2,\ldots,$ then for any $t\in\mathbb{D}$ written in the form $t=t_0+\sum_{i=1}^j 2^{-m_i}$ with integers $t_0,j\geq 0$ and $1\leq m_1<\ldots<m_j$, we define $q(t)=q(1)^{t_0}q(2^{-m_1})\cdot\ldots\cdot q(2^{-m_j})$. It is then readily seen that $q(s+t)=q(s)q(t)$ for all $s,t\in\mathbb{D}$, hence we have extended $(q(2^{-n}))_{n\geq 0}$ to a~semigroup $(q(t))_{t\in\mathbb{D}}$, to which we shall refer as a~{\it dyadic semigroup}. Obviously, such an~extension is unique if we want to preserve the semigroup property. Given a~faithful $^\ast$-representation $\gamma$ of $\mathcal{A}$ on a~Hilbert space $\mathbb{H}$, we call $(q(t))_{t\in\mathbb{D}}$ a~$C_0$-{\it semigroup}, provided that there exists a~$C_0$-semigroup $(T(t))_{t\geq 0}\subset\BB(\mathbb{H})$ such that $T(t)=\gamma(q(t))$ for every $t\in\mathbb{D}$. In particular, if $(Q(t))_{t\in\mathbb{D}}\subset\BB(\Hh)$ is a~dyadic semigroup, we say that $(Q(t))_{t\in\mathbb{D}}$ is a~$C_0$-semigroup if it can be extended to a~$C_0$-semigroup in $\BB(\Hh)$ (in the usual sense) defined on $[0,\infty)$. This can be done if and only if $(Q(t))_{t\in\mathbb{D}}$ is \SOT-continuous and uniformly bounded on every bounded set of dyadic rationals (see Remark~\ref{cont1_R} below).

Anticipating our considerations presented in the next sections we distinguish the following class of compact metric spaces.

\begin{definition}\label{admissible_D}
We call a compact metric space $X$ {\it admissible} if $X=\varprojlim\{X_n,f_n\}_{n\geq 0}$ for some compact subsets $X_n$ of $\C$ being of the form
\begin{equation}\label{adm_Z_D}
X_n=\oo{\exp(2^{-n}Z)}\qquad (n=0,1,2,\ldots),
\end{equation}
where $Z\subset\C$ is a~fixed closed set such that $\sup_{z\in Z}\mathrm{Re}\,z<\infty$, and the connecting maps are all given by $f_n(z)=z^2$. 
\end{definition}

The main starting point for our considerations is the fact that every normal $C_0$-semigroup in $\QQ(\Hh)$ gives rise to an~extension of an~admissible compact metric space which is completely described in terms of the spectrum of the infinitesimal generator. This will be proved in details in Section~3 (see Proposition~\ref{P_spectrum}). For now, we note that with every such extension one can naturally associate a~dyadic operator semigroup in $\QQ(\Hh)$.

\begin{definition}\label{ind_D}
Let $X$ be an admissible compact metric space and let $\Gamma\in\Ext(X)$ be the extension induced by a~Busby invariant $\tau\colon C(X)\to\QQ(\Hh)$. We say that $\Gamma$ {\it induces a~dyadic semigroup} $(q(t))_{t\in\mathbb{D}}\subset\QQ(\Hh)$, provided that $(q(t))_{t\in\mathbb{D}}$ is the unique semigroup extension of $(q(2^{-n}))_{n\geq 0}$ given by 
$$
q(2^{-n})=\tau(\pi_n)\quad\,\, (n=0,1,2,\ldots),
$$

\vspace*{1mm}\noindent
where $\pi_n\in C(X)$ is the projection onto the $n^{\mathrm{th}}$ coordinate.
\end{definition}

\begin{remark}\label{rem_exp}
If $X$ is an inverse limit of sets $X_n\subset\C$ given by \eqref{adm_Z_D}, then since any Busby invariant (being a~$^\ast$-monomorphism) is an~isometry, the dyadic semigroup $(q(t))_{t\in\mathbb{D}}$ induced by an~arbitrary element of $\Ext(X)$ satisfies the estimate
$$
\n{q(2^{-n})}\leq\exp(2^{-n}\zeta)\quad\,\, (n=0,1,2,\ldots),
$$
where $\zeta=\sup_{z\in Z}\mathrm{Re}\,z$.
\end{remark}

\begin{definition}\label{C0_D}
For an admissible compact metric space $X$ and a~faithful $^\ast$-representation $\gamma\colon\QQ(\Hh)\to\BB(\mathbb{H})$, we define $\Ext_{C_0,\gamma}(X)\subseteq\Ext(X)$ to be the set of extensions inducing $C_0$-semigroups in $\QQ(\Hh)$.
\end{definition}

\begin{remark}\label{C0_correct_R}
The above definition is correctly posed in the sense that the question whether a~given $[\sigma]\in\Ext(X)$ induces a~$C_0$-semigroup does not depend on the choice of representative. For, suppose $U\in\BB(\Hh)$ is unitary and $\tau=\pi(U)^\ast\sigma\pi(U)$. Then, for any $\mathbf{x}\in\mathbb{H}$, $m,n\in\N$, and any complex polynomials $P,P_m\in\C[z]$, we have
\begin{equation*}
\begin{split}
    \gamma\big[\pi(U)^\ast\sigma(P_m\circ\pi_m) \pi(U)\big]\mathbf{x}&-\gamma(\tau(P\circ\pi_n))\mathbf{x}\\
    &=\gamma\big[\pi(U)^\ast\sigma(P_m\circ\pi_m-P\circ\pi_n)\pi(U)\big]\mathbf{x}\xrightarrow[\,m\to\infty\,]{}0,
\end{split}
\end{equation*}
provided that $\sigma$ induces a~$C_0$-semigroup and $P_m\circ\pi_m$ converge pointwise on $X$ to $P\circ\pi_n$. This shows that $\tau$ generates a~$C_0$-semigroup if and only if so does $\sigma$. We have used here an~observation that each element of the induced dyadic semigroup corresponds, as in Definition~\ref{ind_D}, to a~complex polynomial on the $n^{\mathrm{th}}$ coordinate of $X$, for some $n\in\N$ (see also the proof of Lemma~\ref{lifting_L}).
\end{remark}

\subsection{Calkin representations}

We will be now dealing with $C_0$-semigroups in $\QQ(\Hh)$ with respect to some concrete representations of $\QQ(\Hh)$ as subalgebras of $\BB(\mathbb{H})$ for a~Hilbert space $\mathbb{H}$. The original construction by Calkin \cite{calkin} goes as follows.

Let $\UU$ be any nonprincipal ultrafilter on $\N$ which we keep fixed for the rest of the paper. The limit along $\UU$ gives rise to a~positive functional $\mathrm{LIM}_\UU\in\ell_\infty^\ast$ which satisfies $\liminf_n a_n\leq\mathrm{LIM}_{n,\UU}a_n\leq\limsup_n a_n$ for every real valued $(a_n)\in\ell_\infty$. Let $\mathscr{W}$ be the collection of all weakly null sequences in $\Hh$ on which we consider an~equivalence relation $(x_n)\sim (y_n)$ defined by $\lim_{n,\UU}\n{x_n-y_n}=0$. We write $\mathscr{W}^{\usim}$ for the collection of all equivalence classes and $[(x_n)]_{\sim}$ for the equivalence class of a~sequence $(x_n)\in\mathscr{W}$. The formula
$$
\big\langle [(x_n)]_\sim, [(y_n)]_\sim\big\rangle=\mathop{\mathrm{LIM}}\limits_{n,\UU}(\ip{x_n}{y_n})_{n=1}^\infty
$$
gives rise to an inner product in $\mathscr{W}^\sim$. Notice that, by the Cauchy--Schwarz inequality,  the right-hand side does not depend on the choice of representatives. Also, if $(x_n)$ is not a~null sequence, then $\ip{[(x_n)]_\sim}{[(x_n)]_\sim}>0$. 

Let $\mathbb{H}$ be the completion of $\mathscr{W}^{\usim}$ under the norm $\n{[(x_n)]_\sim}=\mathrm{LIM}_{n,\UU}\n{x_n}$. Then, $\mathbb{H}$ is a~Hilbert space of density $\mathfrak{c}$ such that $\QQ(\Hh)$ can be faithfully represented on $\BB(\Hh)$ in the following way. For any $T\in\BB(\Hh)$, define $\Phi_0(T)$ to be the linear operator on $\mathscr{W}^{\usim}$ given by $\Phi_0(T)[(x_n)]_\sim=[(Tx_n)]_\sim$. Since $\n{\Phi_0T}\leq\n{T}$, there is a~unique extension of $\Phi_0(T)$ to a~bounded linear operator on $\mathbb{H}$ which we denote by $\Phi(T)$. Obviously, the map $T\mapsto\Phi(T)\in\BB(\mathbb{H})$ is a~$^\ast$-homomorphism and since $\mathrm{ker}\,\Phi=\KK(\Hh)$, we can define an~induced map
$$
\gamma\colon\QQ(\Hh)\to\BB(\mathbb{H}),\,\,\, \gamma (\pi(T))=\Phi(T)\quad (T\in\BB(\Hh)),
$$
which is a~faithful representation of $\QQ(\Hh)$. In what follows, we shall call $\gamma$ the {\it Calkin representation}. If we want to stress that it comes from the ultrafilter $\UU$, we call it the $\UU$-{\it Calkin representation}. A~thorough study of Calkin's representations was done by Reid in \cite{reid}, whereas other, more subtle representations, with type II$_\infty$ factor as the range, were constructed by Anderson and Bunce (see \cite{anderson} and \cite{AB}).

The next technical lemma refers to the well-known criterion for \SOT-continuity of an~operator semigroup $(T(t))_{t\geq 0}$ on a~Banach space $X$, which requires $T(t)$ to be uniformly bounded on an~interval $[0,\delta]$ and converge in norm pointwise on a~dense subset of $X$ (see \cite[Prop.~1.3]{EN}).

\begin{lemma}\label{cont1_L}
Let $X$ be an admissible compact metric space and let $\Gamma\in\Ext(X)$ induce a~dyadic semigoup $(q(t))_{t\in\mathbb{D}}\subset\QQ(\Hh)$. Let also $\gamma$ be a~faithful $^\ast$-representation of $\QQ(\Hh)$ on a~Hilbert space $\mathbb{H}$. Assume that for some dense set $D\subset\mathbb{H}$ we have 
$$
\lim_{t\in\mathbb{D},\,t\to 0}\gamma (q(t))\mathbf{x}=\mathbf{x}\quad\mbox{for each }\, \mathbf{x}\in D.
$$
Then $\sot_{t\in\mathbb{D},\, t\to 0}\gamma(q(t))=I_{\mathbb{H}}$ and there is a~$C_0$-semigroup $(T(t))_{t\geq 0}\subset\BB(\mathbb{H})$ which extends $(\gamma(q(t)))_{t\in\mathbb{D}}$.
\end{lemma}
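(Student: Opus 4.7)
The argument splits into two pieces: (i)~upgrading the pointwise convergence on $D$ to SOT-convergence to $I_{\mathbb{H}}$ on all of $\mathbb{H}$, and (ii)~extending $(\gamma(q(t)))_{t\in\mathbb{D}}$ to a $C_0$-semigroup $(T(t))_{t\geq 0}$ on $[0,\infty)$. Both steps rest on a single input: a locally uniform operator-norm bound on $\gamma(q(t))$, obtained by combining the isometry of $\gamma$ (which is a faithful $^\ast$-representation of a C$^\ast$-algebra) with Remark~\ref{rem_exp}.

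For (i), for any $t\in\mathbb{D}$ written in the canonical form $t=t_0+\sum_{i=1}^j 2^{-m_i}$, the semigroup property together with Remark~\ref{rem_exp} yields
\[
\n{\gamma(q(t))}=\n{q(t)}\leq\n{q(1)}^{t_0}\prod_{i=1}^j\n{q(2^{-m_i})}\leq\exp(\zeta t),
\]
so $M:=\sup_{t\in\mathbb{D}\cap[0,1]}\n{\gamma(q(t))}<\infty$. The standard three-epsilons argument, estimating
\[
\n{\gamma(q(t))\mathbf{x}-\mathbf{x}}\leq (M+1)\n{\mathbf{x}-\mathbf{y}}+\n{\gamma(q(t))\mathbf{y}-\mathbf{y}}
\]
for $\mathbf{y}\in D$ close to $\mathbf{x}\in\mathbb{H}$, then gives $\sot_{t\in\mathbb{D},\,t\to 0}\gamma(q(t))=I_{\mathbb{H}}$.

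For (ii), set $T(t):=\gamma(q(t))$ for $t\in\mathbb{D}$ and, for $t\in[0,\infty)\setminus\mathbb{D}$ and $\mathbf{x}\in\mathbb{H}$, pick $(s_n)\subset\mathbb{D}$ with $s_n\to t$. The identity
\[
T(s_n)\mathbf{x}-T(s_m)\mathbf{x}=T\bigl(\min(s_n,s_m)\bigr)\bigl[T(|s_n-s_m|)\mathbf{x}-\mathbf{x}\bigr],
\]
valid on $\mathbb{D}$ by the semigroup property, combined with the global bound $\n{T(s)}\leq e^{\zeta s}$ on $\mathbb{D}$ and Step~(i), shows that $(T(s_n)\mathbf{x})$ is Cauchy and that the limit is independent of the chosen sequence; hence $T(t)\mathbf{x}:=\lim_n T(s_n)\mathbf{x}$ is well defined and $T(t)\in\BB(\mathbb{H})$ with $\n{T(t)}\leq e^{\zeta t}$. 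The semigroup law $T(s+t)=T(s)T(t)$ on $[0,\infty)$ follows by choosing $s_n\to s$, $t_n\to t$ in $\mathbb{D}$ and splitting
\[
T(s_n)T(t_n)\mathbf{x}-T(s)T(t)\mathbf{x}=T(s_n)[T(t_n)\mathbf{x}-T(t)\mathbf{x}]+[T(s_n)-T(s)]T(t)\mathbf{x},
\]
both terms vanishing by uniform boundedness on bounded intervals and the definition of $T$, while $T(s_n+t_n)\mathbf{x}=T(s_n)T(t_n)\mathbf{x}\to T(s+t)\mathbf{x}$ on the other side. Strong continuity at $0$ through all of $[0,\infty)$ follows from the uniform bound and Step~(i) by approximating real $s\in[0,\delta)$ by dyadics; continuity at $t>0$ then reduces to continuity at $0$ via $T(t+h)-T(t)=T(t)[T(h)-I]$ and $T(t)-T(t-h)=T(t-h)[T(h)-I]$. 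The only subtle point is the ordering of arguments---the uniform bound must be established before Step~(i), and Step~(i) before $T(t)$ is defined off $\mathbb{D}$; after that the rest is standard $C_0$-semigroup bookkeeping along the lines of \cite[Prop.~1.3]{EN}.
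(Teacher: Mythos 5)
Your proof is correct and follows essentially the same route as the paper: the norm bound $\n{q(t)}\leq e^{\zeta t}$ from Remark~\ref{rem_exp} (via isometry of the faithful representation) feeds a three-epsilon argument to get SOT-convergence to $I_{\mathbb{H}}$, and then the identity $\gamma(q(t))\mathbf{x}-\gamma(q(s))\mathbf{x}=\gamma(q(s))[\gamma(q(t-s))\mathbf{x}-\mathbf{x}]$ gives uniform continuity of the orbit maps on bounded dyadic sets, allowing extension by continuity to a $C_0$-semigroup on $[0,\infty)$. Your additional explicit verification of the semigroup law for the extended family is a detail the paper leaves implicit, but it is the same argument.
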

\begin{proof}
For any sequence $(t_n)_{n=1}^\infty\subset\mathbb{D}$ with $t_n\to 0$, consider the compact set $K=\{0\}\cup\{t_n\colon n\in\N\}$ and notice that the map $K\ni t\mapsto \gamma (q(t))$, where $\gamma(q(0))=I_{\mathbb{H}}$, is norm bounded and such that $K\ni t\mapsto \gamma(q(t))\mathbf{x}$ is continuous on $K$ for $\mathbf{x}\in D$. Fix $\mathbf{y}\in\mathbb{H}$, $\e>0$, and pick $\mathbf{x}\in D$, $\delta>0$ and $M>0$ such that $\n{\mathbf{x}-\mathbf{y}}<\e$, $\n{\gamma(q(t))\mathbf{x}-\mathbf{x}}<\e$ for $t\in K$ with $t<\delta$, and $\n{\gamma(q(t))}\leq M$ for every $t\in K$. Then, by triangle inequality, $\n{\gamma(q(t))\mathbf{y}-\mathbf{y}}<(M+2)\e$ for each $t\in K$, $t<\delta$, which shows that $\sot_{n\to\infty}\gamma(q(t_n))=I_{\mathbb{H}}$ and hence $\sot_{t\in\mathbb{D},\,t\to 0}\gamma(q(t))=I_{\mathbb{H}}$.

Fix any $\mathbf{x}\in\mathbb{H}$ and notice that in view of Remark~\ref{rem_exp}, for any $s,t\in\mathbb{D}$, $s<t$, we have
$$
\n{\gamma(q(t))\mathbf{x}-\gamma(q(s))\mathbf{x}}\leq\n{q(s)}\!\cdot\!\n{\gamma(q(t-s))\mathbf{x}-\mathbf{x}}\leq e^{s\zeta}\n{\gamma(q(t-s))\mathbf{x}-\mathbf{x}}.
$$
Hence, the map $\mathbb{D}\ni t\mapsto \gamma(q(t))\mathbf{x}$ is uniformly continuous on $[0,T]\cap\mathbb{D}$ for any $T>0$. We denote its continuous extension to the whole of $[0,\infty)$ by $T(\cdot)\mathbf{x}$, so that for every $\mathbf{x}\in\mathbb{H}$ we have a~continuous function $[0,\infty)\ni t\mapsto T(t)\mathbf{x}$. Since for any $t\in [0,\infty)\setminus\mathbb{D}$ and $\mathbf{x}\in\mathbb{H}$, we have $T(t)\mathbf{x}=\lim_{u\in\mathbb{D},\,u\to t}\gamma(q(u))\mathbf{x}$, the map $\mathbf{x}\mapsto T(t)\mathbf{x}$ is a~bounded linear operator on $\mathbb{H}$. Therefore, $(T(t))_{t\geq 0}$ is a~$C_0$-semigroup in $\BB(\mathbb{H})$ which satisfies $T(t)=\gamma(q(t))$ for $t\in\mathbb{D}$. 
\end{proof}

\begin{remark}\label{cont1_R}
Of course, the same proof as above works also in a~simpler situation, where $(Q(t))_{t\in\mathbb{D}}\subset\BB(\Hh)$ is a~dyadic semigroup satisfying:
\begin{itemize}[leftmargin=24pt]
\setlength{\itemsep}{3pt}
\item $\sup_{t\in [0,s]\cap\mathbb{D}}\n{Q(t)}<\infty$ for every $s>0$;

\item $\sot_{t\in\mathbb{D},\,t\to 0}Q(t)=I.$
\end{itemize}
Then there is a $C_0$-semigroup $(Q(t))_{t\geq 0}\subset\BB(\Hh)$ which extends the given one.
\end{remark}

\section{Extensions generated by semigroups}
\subsection{An inverse limit associated with the spectrum}

As we have already announced, normal $C_0$-semigroups in the Calkin algebra naturally generate extensions of admissible compact metric spaces. Of course, to speak sensibly about $C_0$-semigroups we need to choose a~faithful $^\ast$-representation of $\QQ(\Hh)$ on a~Hilbert space $\mathbb{H}$. In the results of the present section, the choice of representation is arbitrary. 

Below, we can either assume that $(q(t))_{t\geq 0}$ is a~$C_0$-semigroup of normal elements of $\QQ(\Hh)$, or assume a~weaker condition that the~dyadic semigroup $(q(t))_{t\in\mathbb{D}}$ is a $C_0$-semigroup (i.e. it can be extended to a~$C_0$-semigroup $(T(t))_{t\geq 0}\subset\BB(\mathbb{H})$), however, we then need to assume that all $T(t)$ are normal operators. The reason is that we apply the spectral mapping theorem for normal $C_0$-semigroups (see \cite[Cor.~2.12]{EN}).

\begin{proposition}\label{P_spectrum}
Let $(q(t))_{t\geq 0}\subset\QQ(\Hh)$ be a $C_0$-semigroup of normal operators in the Calkin algebra. Let
$$
\Aa_0=\mathrm{C}^\ast\big(\{q(2^{-n})\colon n=\infty,0,1,2,\ldots\}\big)
$$

\vspace*{1mm}\noindent
be the \cs-subalgebra of $\QQ(\Hh)$ generated by the identity and all $q(2^{-n})$ for $n\in\N_0$, and let
$$
\mathcal{E}=\pi^{-1}(\Aa_0)
$$
be the \cs-subalgebra of $\BB(\Hh)$ generated by $\{q(2^{-n})\colon n=\infty,0,1,2,\ldots\}+\KK(\Hh)$.

\vspace*{1mm}
\begin{itemize}[leftmargin=20pt]
\setlength{\itemsep}{2pt}
    \item[{\rm (a)}] Let $A$ be the generator of $(q(t))_{t\geq 0}$ and define
    $$
    \Omega_n=\oo{\exp(2^{-n}\sigma(A))}\quad (n=0,1,2,\ldots)
    $$
Then, $\Aa_0$ is a~commutative \cs-algebra and its maximal ideal space $\Delta$ is homeomorphic to the projective limit of the inverse system $\{\Omega_n,p_n\}_{n\geq 0}$, where $p_n(z)=z^2$ for each $n=0,1,2,\ldots$
    
    \item[{\rm (b)}] The \cs-algebra $\mathcal{E}$ contains $\KK(\Hh)$ as an ideal and there is an~exact sequence
    $$
    \begin{tikzcd}
    0 \arrow[r] & \KK(\Hh) \arrow[r, "\iota"] & \mathcal{E} \arrow[r, "\theta"] & C(\Delta) \arrow[r] & 0,
    \end{tikzcd}
    $$
where $\theta(T)=\widehat{\pi(T)}$ and $\Aa_0\ni q\xmapsto[\phantom{xx}]{}\widehat{q}\in C(\Delta)$ is the Gelfand transform.
\end{itemize}
\end{proposition}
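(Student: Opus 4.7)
The plan is as follows. First, for part (a), note that the semigroup law gives $q(2^{-n})q(2^{-m}) = q(2^{-n}+2^{-m}) = q(2^{-m})q(2^{-n})$, so all generators of $\Aa_0$ commute pairwise. Since each $q(2^{-n})$ is normal, Fuglede's theorem yields that the adjoints commute with the $q(2^{-k})$'s as well, and hence $\Aa_0$ is commutative. As $\gamma$ is faithful (hence isometric) and spectral permanence holds between unital \cs-subalgebras, one has $\sigma_{\QQ(\Hh)}(q(t)) = \sigma_{\BB(\mathbb{H})}(T(t))$. Applying the spectral mapping theorem for normal $C_0$-semigroups \cite[Cor.~2.12]{EN} to $(T(t))$ gives $\sigma(q(2^{-n})) = \overline{\exp(2^{-n}\sigma(A))} = \Omega_n$; the existence of the semigroup guarantees $\sup_{z\in\sigma(A)}\RE z < \infty$, so each $\Omega_n$ is indeed compact.

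The second step identifies $\Delta$ with $\varprojlim\{\Omega_n, p_n\}$. Set $\Aa_n = \mathrm{C}^\ast(1, q(2^{-n}))$; continuous functional calculus gives $\Aa_n \cong C(\Omega_n)$. The identity $q(2^{-n}) = q(2^{-(n+1)})^2$ yields $\Aa_n \subseteq \Aa_{n+1}$, and after identifying spectra this inclusion is dual to the map $p_n(z) = z^2$ from $\Omega_{n+1}$ to $\Omega_n$ (surjectivity of $p_n$ follows by density of $\exp(2^{-(n+1)}\sigma(A))$ in $\Omega_{n+1}$ together with compactness). Since $\Aa_0 = \overline{\bigcup_n \Aa_n}$, Gelfand duality converts this inductive limit of commutative \cs-algebras into the inverse limit of maximal ideal spaces. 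Concretely, the evaluation map $\Delta \ni \chi \mapsto (\chi(q(2^{-n})))_{n\geq 0}$ lands in $\varprojlim \Omega_n$ by the squaring relation, is injective (characters are determined by their values on the generators), and is surjective (every $(z_n) \in \varprojlim \Omega_n$ defines compatible characters on each $\Aa_n$ that extend by continuity to $\Aa_0$). Being a continuous bijection between compact Hausdorff spaces, this map is a homeomorphism.

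Part (b) is then essentially formal. The set $\mathcal{E} = \pi^{-1}(\Aa_0)$ is a \cs-subalgebra of $\BB(\Hh)$ as the preimage of a \cs-subalgebra under the $^\ast$-homomorphism $\pi$, and it contains $\KK(\Hh) = \ker\pi$ as an ideal (since $\KK(\Hh)$ is already an ideal of $\BB(\Hh)$). Composing the quotient $\mathcal{E}\to\mathcal{E}/\KK(\Hh)\cong\Aa_0$ with the Gelfand transform $\Aa_0\cong C(\Delta)$ yields the map $\theta(T)=\widehat{\pi(T)}$, and exactness of the displayed sequence follows immediately from $\ker\theta=\ker\pi=\KK(\Hh)$ together with surjectivity of both $\pi|_{\mathcal{E}}$ and the Gelfand transform. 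The main technical obstacle I expect is the careful bookkeeping in the second step: verifying that the inductive limit $\overline{\bigcup_n \Aa_n}$ really is all of $\Aa_0$, that the connecting $^\ast$-homomorphisms are precisely the pullbacks along the squaring maps $p_n$, and that Gelfand duality converts directed unions of unital commutative \cs-algebras into the corresponding inverse limits of spectra.
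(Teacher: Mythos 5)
Your proposal is correct, and parts of it (commutativity, part (b)) match the paper; but the core of part (a) — the identification of $\Delta$ with $\varprojlim\Omega_n$ — is done by a genuinely different route. The paper computes the joint spectrum of the family $\{q(2^{-n})\}$ directly: it characterizes membership of $\boldsymbol{\lambda}=(\lambda_n)$ in the joint spectrum via noninvertibility of the weighted positive sum $\sum_n 2^{-n}\n{\lambda_nI-q(2^{-n})}^{-2}(\lambda_nI-q(2^{-n}))^\ast(\lambda_nI-q(2^{-n}))$, reads off from the $L_\infty(\mathsf{E}^A)$ functional calculus both that $\lambda_n\in\Omega_n$ and that $\lambda_{n+1}^2=\lambda_n$, and then runs an $\e$-argument with nested sets $W_n$ to show every point of $\varprojlim\Omega_n$ is attained. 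You instead set $\Aa_n=\mathrm{C}^\ast(1,q(2^{-n}))\cong C(\Omega_n)$, observe $\Aa_0=\oo{\bigcup_n\Aa_n}$ with inclusions dual to the squaring maps, and verify directly that the evaluation map $\chi\mapsto(\chi(q(2^{-n})))_n$ is a continuous bijection onto $\varprojlim\Omega_n$. Your route is shorter and more conceptual, and your surjectivity argument (compatible characters on the $\Aa_n$ extending by continuity) is cleaner than the paper's density argument; the price is that you must import $\sigma(q(2^{-n}))=\Omega_n$ from the spectral mapping theorem for normal semigroups together with spectral permanence under the faithful representation $\gamma$, whereas the paper extracts the same information in passing from its single functional-calculus computation. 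Both proofs rest on the same underlying facts (normality of the generator and the identity $q(2^{-n})=q(2^{-(n+1)})^2$), and your establishment of commutativity via the semigroup law plus Fuglede's theorem is a harmless variant of the paper's appeal to the joint functional calculus in $L_\infty(\mathsf{E}^A)$. Part (b) is formal in both treatments and your argument coincides with the paper's.
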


\begin{proof}
(a) First, observe that since there exists $\zeta<\infty$ such that $\mathrm{Re}\,\lambda\leq\zeta$ for each $\lambda\in\sigma(A)$, all the sets $\Omega_n$ are compact subsets of $\C$. Moreover, $A$ is normal and if $\mathsf{E}^A$ stands for the spectral decomposition of $A$, then each $q(t)$ can be calculated via functional calculus in $L_\infty(\mathsf{E}^A)$ by 
$$
q(t)=\int_{\sigma(A)}e^{t\lambda}\,\dd \mathsf{E}^A(\lambda)\qquad (t\geq 0),
$$
as $\vert e^{t\lambda}\vert\leq e^{t\zeta}$ and hence the function under the integral is bounded. Plainly, $q(s), q(t), q(t)^\ast$ commute for all $s,t\geq 0$, thus $\Aa_0$ is commutative.

The joint spectrum of the set $\{q(2^{-n})\colon n=0,1,2,\ldots\}$ is a~compact subset of $\C^\infty$ defined by
$$
\sigma_{\Aa_0}\big(q(2^{-n})\colon n=0,1,2,\ldots\big)=\big\{(\p(q(2^{-n})))_{n=0}^\infty\colon \p\in\Delta\big\}
$$
and the map 
$$
\Delta\ni\p\xmapsto[\phantom{xxx}]{}(\p(q(2^{-n})))_{n=0}^\infty
$$
is a homeomorphism between $\Delta$ and $\sigma_{\Aa_0}(q(2^{-n})\colon n=0,1,2,\ldots)$ (see \cite[Cor.~3.1.13]{rickart}). On the other hand, a~sequence $\boldsymbol{\lambda}=(\lambda_n)_{n=1}^\infty\in\C^\infty$ belongs to $\sigma_{\Aa_0}(q(2^{-n})\colon n=0,1,2,\ldots)$ if and only if
\begin{equation}\label{q_lambda}
q(\boldsymbol{\lambda})\coloneqq \sum_{n=0}^\infty 2^{-n}\frac{(\lambda_n I-q(2^{-n}))^\ast(\lambda_n I-q(2^{-n}))}{\n{\lambda_n I-q(2^{-n})}^2}
\end{equation}
is not invertible in $\QQ(\Hh)$. Indeed, as each summand is a~positive operator, we infer that for every linear multiplicative functional $\p\in\Delta$ we have $\p(q(\boldsymbol{\lambda}))=0$ if and only if $\p(q(2^{-n}))=\lambda_n$ for each $n=0,1,2,\ldots$ Hence, if $q(\boldsymbol{\lambda})$ is not invertible we pick $\p\in\Delta$ so that $\p(q(\boldsymbol{\lambda}))=0$ to see that $\boldsymbol{\lambda}$ belongs to the joint spectrum. Conversely, if $q(\boldsymbol{\lambda})$ is invertible, then we have $\p(q(\boldsymbol{\lambda}))\neq 0$ for every $\p\in\Delta$, thus $\boldsymbol{\lambda}$ is not in the joint spectrum.

Fix any $\boldsymbol{\lambda}=(\lambda_n)_{n=0}^\infty\in\C^\infty$. The operator $(\lambda_n I-q(2^{-n}))^\ast (\lambda_n I-q(2^{-n}))$ corresponds via functional calculus to the map $\phi_n\in L_\infty(\mathsf{E}^A)$ given by
$$
\phi_n(z)=\vert\lambda_n-\exp(2^{-n}z)\vert^2.
$$
For every $z\in\sigma(A)$, we have $\mathrm{Re}\, z\leq\zeta$ and hence
$$
\n{\phi_n}_\infty\leq \big(\abs{\lambda_n}+\exp(2^{-n}\zeta)\big)^2
$$
which implies that each denominator in formula \eqref{q_lambda} is majorized by a~constant and cannot become arbitrarily large after applying functional calculus and varying $z$ over $\sigma(A)$. Hence, $q(\boldsymbol{\lambda})$ is noninvertible if and only if $0$ lies in the closure of the range of the map
$$
\sigma(A)\ni z\xmapsto[\phantom{xxx}]{}\sum_{n=0}^\infty 2^{-n}\frac{\phi_n(z)}{\n{\phi_n}_\infty},
$$
which implies that each $\lambda_n$ must belong to the closure of $\exp(2^{-n}\sigma(A))$ which is denoted by $\Omega_n$. Moreover, for any $n=0,1,2,\ldots$ we can pick $z\in\sigma(A)$ so that both $\phi_n(z)$ and $\phi_{n+1}(z)$ are arbitrarily close to zero. Since $\exp(2^{-n-1}z)^2=\exp(2^{-n}z)$, we infer that for $q(\boldsymbol{\lambda})$ being noninvertible we also must have $\lambda_{n+1}^2=\lambda_n$ ($n=0,1,\ldots$). This means that every element of the joint spectrum belongs to the inverse limit $\varprojlim\Omega_n$.

Conversely, fix any $\boldsymbol{\lambda}\in\varprojlim\Omega_n$ and any $\e>0$. Take $C>1$ such that $\vert \lambda_n+\exp(2^{-n}z)\vert\leq C$ for all $n\in\N_0$ and $z\in\sigma(A)$, and define
$$
W_n=\big\{z\in\sigma(A)\colon \vert \lambda_n-\exp(2^{-n}z)\vert\leq C^{-n}\e\big\}\quad (n=0,1,2,\ldots).
$$
Observe that for $z\in W_n$, we have
\begin{equation*}
    \begin{split}
        \vert \lambda_{n-1}-\exp(2^{-n+1}z)\vert &=\vert\lambda_n^2-\exp(2\cdot 2^{-n}z)\vert\\
        &=\vert \lambda_n-\exp(2^{-n}z)\vert\cdot\vert \lambda_n+\exp(2^{-n}z)\vert\leq C^{-n+1}\e
    \end{split}
\end{equation*}
which shows that $z\in W_{n-1}$ and, similarly, $z\in W_{n-2},\ldots,W_0$. Therefore, $\bigcap_{j=0}^n W_j\neq\varnothing$. This means that the identity map
$$
\mathrm{id}\colon \sigma_{\Aa_0}(q(2^{-n})\colon n=0,1,\ldots)\xrightarrow[\phantom{xxx}]{}\,\varprojlim\Omega_n
$$
has dense range. Thus, it is an~onto homeomorphism, as both topologies are the product topology which is compact and Hausdorff. Consequently, $\Delta$ is homeomorphic to $\varprojlim\Omega_n$.

\vspace*{2mm}\noindent
(b) Of course, $\KK(\Hh)$ forms an ideal in $\mathcal{E}$. For every $T\in\mathcal{E}$, we have $\pi(T)\in\Aa_0$ and each element in $\Aa_0$ is of this form. Hence, the formula $\theta(T)= \skew{4}\widehat{\textrm{\emph{\rule{0ex}{1.3ex}\smash{$\pi(T)$}}}}$ yields a~$^\ast$-homomorphism onto $C(\Delta)$. Obviously, $T\in\mathrm{ker}\,\theta$ if and only if $\pi(T)=0$, i.e. $T\in\KK(\Hh)$.
\end{proof}

\subsection{Lifting problems}

Henceforth, we will be using the notation introduced in Proposition~\ref{P_spectrum} without explanation. In particular, we identify the extension given by $(\mathcal{E},\theta)$ with an~element of $\Ext(\Omega)$. 

We shall now briefly explain how the lifting problem for operator semigroups is related to the lifting problem for separable abelian \cs-subalgebras of $\QQ(\Hh)$. We start with a~lemma saying that, apart from the continuity issue, our lifting problem is really about deciding whether the induced extension is the trivial one.

\begin{lemma}[{\bf `Lifting lemma'}]\label{lifting_L}
Let $X$ be an admissible compact metric space and let $\Gamma\in\Ext_{C_0,\gamma}(X)$ with respect to a~faithful $^\ast$-representation $\gamma\colon\QQ(\Hh)\to\BB(\mathbb{H})$. Suppose that the dyadic semigroup $(q(t))_{t\in\mathbb{D}}\subset\QQ(\Hh)$ induced by $\Gamma$ admits a~lift to a~dyadic $C_0$-semigroup of normal operators $(Q(t))_{t\in\mathbb{D}}\subset\BB(\Hh)$ {\rm (}i.e. $\pi Q(t)=q(t)$ for every $t\in\mathbb{D}${\rm )} such that the spectra of $q(2^{-n})$ and $Q(2^{-n})$ coincide for each $n\in\N$. Then $\Gamma=\Theta$.
\end{lemma}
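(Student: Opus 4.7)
By the standard characterization of the trivial element in the BDF theory, $\Gamma=\Theta$ if and only if the Busby invariant $\tau\colon C(X)\to\QQ(\Hh)$ representing $\Gamma$ admits a $^\ast$-homomorphic lift, i.e.\ a $^\ast$-homomorphism $\rho\colon C(X)\to\BB(\Hh)$ with $\pi\rho=\tau$. My plan is to construct such a $\rho$ directly from the normal dyadic semigroup $(Q(t))_{t\in\mathbb{D}}$ by applying Gelfand duality to the commutative \cs-algebra generated by the $Q(2^{-n})$, and then to check $\pi\rho=\tau$ on the natural generators of $C(X)$.

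Because $(Q(t))_{t\in\mathbb{D}}$ is a semigroup, the operators $\{Q(2^{-n})\}_{n\geq 0}$ pairwise commute, and since each is normal, Fuglede's theorem yields $[Q(2^{-n}),Q(2^{-m})^\ast]=0$ for all $m,n$. Hence
$$
\mathcal{B}_0:=\mathrm{C}^\ast\big(I,\{Q(2^{-n}):n\geq 0\}\big)\subseteq\BB(\Hh)
$$
is a commutative unital \cs-algebra; let $Y$ denote its Gelfand spectrum. The projection $\pi_n\colon X\to X_n$ is surjective (the connecting maps $f_n(z)=z^2$ send $X_{n+1}$ onto $X_n$, a property inherited by $\varprojlim$), so since $\tau$ is isometric, $\sigma_{\QQ(\Hh)}(q(2^{-n}))=\sigma_{C(X)}(\pi_n)=X_n$. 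Combined with the spectrum-preservation hypothesis and spectral permanence for normal elements, this gives $\widehat{Q(2^{-n})}(Y)=X_n$ for every $n$. The joint Gelfand map
$$
\Phi\colon Y\longrightarrow\prod_{n\geq 0}X_n,\qquad \Phi(y)=\big(\widehat{Q(2^{-n})}(y)\big)_{n\geq 0},
$$
is continuous, and the identity $Q(2^{-n-1})^2=Q(2^{-n})$ forces $\Phi(y)_{n+1}^2=\Phi(y)_n$, so $\Phi(Y)\subseteq X=\varprojlim X_n$.

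Dualizing $\Phi$ via Gelfand duality produces a $^\ast$-homomorphism $\rho\colon C(X)\to\mathcal{B}_0\subseteq\BB(\Hh)$ with $\rho(\pi_n)=Q(2^{-n})$ for every $n$. Both $\pi\rho$ and $\tau$ are $^\ast$-homomorphisms $C(X)\to\QQ(\Hh)$ which agree on the generators, since $\pi\rho(\pi_n)=\pi Q(2^{-n})=q(2^{-n})=\tau(\pi_n)$; as the $\{\pi_n\}$ separate points of $X\subseteq\prod_n X_n$, Stone--Weierstrass renders the $^\ast$-algebra generated by $\{1,\pi_n:n\geq 0\}$ dense in $C(X)$, and $\pi\rho=\tau$ follows by norm continuity. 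Therefore $\rho$ is a $^\ast$-homomorphic lift of $\tau$ and $\Gamma=\Theta$.

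The step that carries the content is the identification $\widehat{Q(2^{-n})}(Y)=X_n$, where the spectrum-preservation hypothesis is essential: without it, $\Phi$ could a priori land in an inverse limit strictly larger than $X$, and no homomorphism out of $C(X)$ could be produced from $(Q(t))_{t\in\mathbb{D}}$ in this way. Once this is secured, everything else is a routine application of Fuglede's theorem, Gelfand duality, and Stone--Weierstrass.
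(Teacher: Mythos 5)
Your proof is correct and follows essentially the same route as the paper: both arguments reduce triviality of $\Gamma$ to producing a $^\ast$-homomorphic section $\rho$ of the Busby invariant with $\rho(\pi_n)=Q(2^{-n})$, and both use the hypothesis $\sigma(Q(2^{-n}))=\sigma(q(2^{-n}))$ at exactly the same point — to ensure that $\rho$ is compatible with $C(X)$. The only difference is presentational: the paper defines $\rho$ on the dense $^\ast$-subalgebra $\{P\circ\pi_n\colon P\in\C[z,\oo{z}]\}$ and verifies it is isometric via the spectral computation of $\n{P(Q(2^{-n}),Q(2^{-n})^\ast)}$, whereas you dualize the joint-spectrum map $Y\to\varprojlim X_n$ by Gelfand duality, which is the same mechanism in commutative \cs-algebra language.
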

\begin{proof}
Let $\sigma$ be the Busby invariant for $\Gamma$, $\p$ be the quotient map, and assume that the dyadic semigroup determined by the formula $q(2^{-n})=\sigma(\pi_n)$ ($n=0,1,2,\ldots$) admits a~lifting to a~$C_0$-semigroup $(Q(t))_{t\in\mathbb{D}}$. Write $X=\varprojlim X_n$ and let $\mathbf{A}\subset C(X)$ be the $^\ast$-subalgebra of functions which coincide with a~polynomial in variables $z$, $\oo{z}$ on some $X_n$, that is, $\mathbf{A}=\{P\circ\pi_n\colon P\in\C[z,\oo{z}],\,n\in\N\}$. Note that $\mathbf{A}$ is closed under addition and multiplication due to the fact that every polynomial acting on $X_n$ is also a~polynomial on $X_m$, for every $m>n$. Define $\rho$ on $\mathbf{A}$ by $\rho(f)=P(Q(2^{-n}),Q(2^{-n})^\ast)$ for $f=P\circ\pi_n$. Since addition, multiplication and involution in $\mathbf{A}$ reduce to the same operations on polynomials on some $X_n$, we see that $\rho\colon\mathbf{A}\to\BB(\Hh)$ is a~unital $^\ast$-homomorphism. Moreover, for any $f\in\mathbf{A}$ as above, $\|\rho(f)\|$ equals the supremum of $\vert P(z,\oo{z}\,)\vert$ over $z$ in the spectrum of $Q(2^{-n})$, which is the same as the spectrum of $q(2^{-n})$, and hence $\|\rho(f)\|=\sup\{\vert P(z,\oo{z}\,)\colon z\in X_n\}=\|f\|_\infty$, the supremum norm in $C(X)$. By the Stone--Weierstrass theorem, $\mathbf{A}$ is dense in $C(X)$, thus there exists a~unique continuous extension of $\rho$ to a~$^\ast$-homomorphism. Since for every $f\in\mathbf{A}$, $f=P\circ\pi_n$, we have
\begin{equation*}
\begin{split}
    \p\rho(f) &=\p\big[P(Q(2^{-n}),Q(2^{-n})^\ast)\big]=P\big[\p(Q(2^{-n})),\p(Q(2^{-n}))^\ast\big]\\
    &=P\big[\p\rho(\pi_n),\oo{\p(\rho(\pi_n)}\big]=P(\pi_n,\oo{\pi_n})=f,
\end{split}
\end{equation*}
we conclude that the obtained extension is a~right section for $\Gamma$. Hence, $\Gamma=\Theta$.
\end{proof}

\begin{example}\label{ex_2k}
Let $Z=\{2k\pi\ii\colon k\in\Z\}$ and 
$$
X_n=\oo{\exp(2^{-n}Z)}=\{\exp\small(2^{-n+1}k\pi\ii\small)\colon k=0,1,\ldots,2^n-1\},
$$
which is the set of all roots of unity of degree $2^n$, for $n=0,1,2,\ldots$ Let also $f_n\colon X_{n+1}\to X_n$ be given by $f(z)=z^2$. Then $X=\varprojlim\{X_n,f_n\}_{n\geq 0}$ is homeomorphic to the Cantor set, hence $\Ext(X)=0$. Consequently, any normal $C_0$-semigroup in $\QQ(\Hh)$ whose infinitesimal generator has spectrum $Z$ induces the zero extension.
\end{example}

\begin{example}\label{ex_imaginary}
Let $Z=\ii\R$ be the imaginary axis, so that $X_n=\exp(2^{-n}Z)=S^1$ for every $n=0,1,2,\ldots$ Let again each $f_n\colon S^1\to S^1$ be given by $f_n(z)=z^2$. Then, the inverse limit 
$$
\Sigma_2=\varprojlim\{S^1,z^2\}
$$
is the~$2$-{\it adic solenoid}. It is known (see \cite{KS}) that $\Ext(\Sigma_2)=0$. This can be proved, for example, by using Milnor's exact sequence (quoted in Theorem~\ref{milnor_thm}), which in this case has the form
$$
\begin{tikzcd}
0 \arrow[r] & \varprojlim^{(1)}\Ext(S^2) \arrow[r] & \Ext(\Sigma_2) \arrow[r] & \varprojlim \Z \arrow[r] & 0.
\end{tikzcd}
$$
By the well-known result \cite[Cor.~7.1]{BDF}, we have $\Ext(S^2)=0$, 
and since the connecting maps $\Z\to\Z$ in the quotient group are all given by $x\mapsto 2x$, we obviously have $\varprojlim\Z=0$. Therefore, $\Ext(\Sigma_2)$ is trivial and hence any normal $C_0$-semigroup in $\QQ(\Hh)$ whose generator has spectrum $\ii\R$ induces the zero extension.
\end{example}

We will return to this example in Section~4 to show the lack of \SOT-continuity, despite of the fact that all extensions of $\Sigma_2$ are trivial (see Example~\ref{T_not_cont_E}). Roughly speaking, the reason is that in the~infinite toin coss, both outcomes occur infinitely often almost surely.

\section{Proofs of the main results}

\subsection{Characterization of strong continuity}
In this final section we provide proofs of our main results. We start by characterizing strong continuity in terms of weak convergence in $\ell_\infty$.

\begin{proposition}\label{C0_i_iii_T}
Let $X$ be an admissible compact metric space, $X=\varprojlim X_n$, \vspace*{-1pt}where each $X_n$ is given by \eqref{adm_Z_D} and $Z\subset\C$ is a~closed set with 
$$
\eta\coloneqq\inf_{z\in Z}\mathrm{Re}\,z\leq\sup_{z\in Z}\mathrm{Re}\,z\eqqcolon\zeta<\infty.
$$
\begin{enumerate}[label={\rm (\roman*)}, leftmargin=24pt, labelindent=0pt, wide]
\setlength{\itemindent}{4pt}
\setlength{\itemsep}{9pt}
\item In order that $\Theta\in\Ext_{C_0,\gamma}(X)$ for a~fixed $\UU\!$-Calkin representation, it is necessary that $-\infty<\eta\leq\zeta<\infty$ and $\lim_{n\to\infty}\abs{1-\pi_n(\xi)}=0$ for every $\xi\in X$.

\item We have $\Theta\in\Ext_{C_0,\gamma}(X)$ for all $\mathscr{V}\!$-Calkin representations, with any $\mathscr{V}\in\beta\N\setminus\N$, if and only if for one (equivalently: for every) dense subset $\{\xi_k\colon k\in\N\}\subset X$, and any sequences $(L_n)_{n=1}^\infty$, $(S_n)_{n=1}^\infty$ of positive integers satisfying $\lim_{n\to\infty} 2^{-L_n}S_n=0$, we have
$$
\big(\vert 1-\pi_{L_n}(\xi_k)^{S_n}\vert\big)_{k=1}^\infty\yrightarrow[\,n\to\infty\,][2pt]{\,\,w\,\,}[0pt] 0\quad\mbox{in }\,\,\ell_\infty.
$$
\end{enumerate}
\end{proposition}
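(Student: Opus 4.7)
My plan is to access both parts through the explicit representative $\sigma(g) = \bigoplus_{i=1}^\infty g(\xi_i) I_i$ of $\Theta$ (as in \eqref{sigma_neutral}), and to unravel \SOT-continuity at $0$ via the diagonal action of $\sigma$. Fix throughout an orthonormal basis $(e_{i,j})_{j=1}^\infty$ of each summand $\Hh_i$, and for $t = 2^{-L}S \in \mathbb{D}$ write $g_t = \pi_L^S \in C(X)$, so that $q(t) = \pi\sigma(g_t)$; this is well-defined thanks to $\pi_{L+1}^2 = \pi_L$.

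For part (i), given $\xi \in X$, I would enlarge $\{\xi\}$ to a countable dense subset $\{\xi_i\} \subset X$ with $\xi_{i_0} = \xi$. The orthonormal (hence weakly null) sequence $(e_{i_0, m})_{m=1}^\infty$ in $\Hh_{i_0}$ yields a unit vector $\mathbf{x} = [(e_{i_0, m})_m]_\sim \in \mathbb{H}$ on which $\sigma(g)$ acts by multiplication by $g(\xi)$, so $\gamma(q(t))\mathbf{x} = g_t(\xi)\mathbf{x}$. Strong continuity then forces $g_t(\xi) \to 1$ as $t \to 0$ in $\mathbb{D}$, and the specialisation $t = 2^{-n}$ gives $\pi_n(\xi) \to 1$. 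Finiteness of $\eta$ follows by contradiction: if $\eta = -\infty$, then $0 \in X_n$ for every $n$, so $(0,0,\ldots) \in X$ would violate the relation just proved.

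The proof of (ii) rests on the identity obtained by decomposing $x_m = \sum_i x_m^{(i)}$ with $x_m^{(i)} \in \Hh_i$ and using that $\sigma(g_{t_n})$ acts as the scalar $g_{t_n}(\xi_i)$ on $\Hh_i$:
\[
\|\gamma_\mathscr{V}(q(t_n))\mathbf{x} - \mathbf{x}\|^2 = \mathrm{LIM}_{m,\mathscr{V}} \sum_{i=1}^\infty a_n(i)\, b_m(i),
\]
where $a_n(i) = |1 - \pi_{L_n}(\xi_i)^{S_n}|^2 \in \ell_\infty$ (uniformly bounded in $n$ by Remark~\ref{rem_exp}) and $b_m(i) = \|x_m^{(i)}\|^2$, so $(b_m) \subset \ell_1$ with $\|b_m\|_{\ell_1} = \|x_m\|^2$ uniformly bounded. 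For the $\Leftarrow$ direction, the functional $a \mapsto \mathrm{LIM}_{m,\mathscr{V}}\langle a, b_m\rangle$ lies in $\ell_\infty^*$; weak convergence of $(a_n)$ — equivalent up to boundedness to weak convergence of the unsquared version, via $c^2 \leq c\|c\|_\infty$ and $c \leq c^2/\varepsilon + \varepsilon$ — then forces $\|\gamma_\mathscr{V}(q(t_n))\mathbf{x} - \mathbf{x}\| \to 0$ for every $\mathbf{x} \in \mathbb{H}$ and every $\mathscr{V}$. Lemma~\ref{cont1_L} together with the bound of Remark~\ref{rem_exp} then gives $\Theta \in \Ext_{C_0,\gamma_\mathscr{V}}(X)$.

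For the $\Rightarrow$ direction, strong continuity is tested on two families of vectors. The eigenvectors of part (i), now with arbitrary exponents $S_n$, give pointwise convergence $|1 - \pi_{L_n}(\xi_k)^{S_n}| \to 0$ for each $k \in \N$. For each free ultrafilter $\mathscr{V}$, applying $\gamma_\mathscr{V}$ to $\mathbf{x} = [(u_k)_k]_\sim$ with $u_k \in \Hh_k$ a unit vector — weakly null thanks to mutual orthogonality of the blocks — reads off $\mathrm{LIM}_{k,\mathscr{V}}|1 - \pi_{L_n}(\xi_k)^{S_n}|^2 \to 0$. Under the identification $\ell_\infty = C(\beta\N)$, bounded pointwise convergence on $\beta\N$ — verified at every principal and free point — is equivalent to weak convergence by the dominated convergence theorem, giving the desired conclusion. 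The equivalence for one versus every dense set is automatic, since strong continuity is an invariant of $[\Theta]$ (all trivial extensions being unitarily equivalent by Weyl--von Neumann--Berg, and hence the characterization is independent of the representative used to index the $\ell_\infty$ sequence). The main technical challenge I foresee is the bookkeeping between the ultrafilter $\mathscr{V}$ in $m$, the dyadic limit $t \to 0$ along $t_n = 2^{-L_n}S_n$, and the harmless but slightly awkward switch between the squared and unsquared versions of the tested $\ell_\infty$ sequence.
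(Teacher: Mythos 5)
Your proposal is correct and follows essentially the same route as the paper: the same diagonal representative $\sigma(g)=\bigoplus_k g(\xi_k)I_{\Hh_k}$ of $\Theta$, the same reduction $q(t)=\tau(\pi_L^S)$ for $t=2^{-L}S$, the same computation of $\|\gamma(q(t_n))\mathbf{x}-\mathbf{x}\|^2$ as an ultralimit of $\ell_1$--$\ell_\infty$ pairings against the block norms $\|P_kx_m\|^2$, and the same identification of weak convergence in $\ell_\infty$ with bounded pointwise convergence on $\beta\N$ tested at principal points (your rank-one vectors) and free points (your block-diagonal vectors, i.e.\ the paper's choice $\omega_{j,k}=\delta_{jk}$). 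Your explicit handling of the squared versus unsquared sequences is a point the paper glosses over, but the argument is the same.
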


\begin{proof}
As we know, the zero element $\Theta$ is determined by a~Busby invariant $\tau=\pi\sigma$, where $\sigma\colon C(X)\to\BB(\Hh)$ is a~$^\ast$-homomorphism given by $\sigma(f)=\bigoplus_{k=1}^\infty f(\xi_k)I_{\Hh_k}$, where $\{\xi_k\colon k\in\N\}$ is an~arbitrary dense subset of $X$ and $\Hh\cong\bigoplus_{k=1}^\infty\Hh_k$ is a~decomposition into infinite-dimensional subspaces. Recall also that the dyadic semigroup $(q(t))_{t\in\mathbb{D}}$ generated by $\Theta$ is the extension of the semigroup $(q(2^{-n}))_{n\geq 0}$ given by $q(2^{-n})=\tau(\pi_n)$. 

Consider any $t\in\mathbb{D}$ of the form $t=2^{-\ell_1}+\ldots+2^{-\ell_k}$, where $1\leq \ell_1<\ldots<\ell_k$ are integers. Then
$$
q(t)=q(2^{-\ell_1})\cdot\ldots\cdot q(2^{-\ell_k})=\tau(\pi_{\ell_1}\cdot\ldots\cdot\pi_{\ell_k})
$$
and since
\begin{equation}\label{piii}
\pi_{\ell_i}^{2^{\ell_i-\ell_1}}=\pi_{\ell_1}\quad\,\mbox{for each }\,\,1\leq i\leq k,
\end{equation}
we can write $q(t)=\tau(\pi_{\ell_k}^s)$, where $s=\sum_{i=1}^k 2^{\ell_i-\ell_1}$. Now, let $(t_n)_{n=1}^\infty\subset\mathbb{D}$ be any sequence converging to zero, 
$$
t_n=2^{-\ell_{n,1}}+\ldots+2^{-\ell_{n,k_n}}\quad (n\in\N)
$$
with integers $1\leq \ell_{n,1}<\ldots<\ell_{n,k_n}$, and define: 
\begin{equation}\label{FLS_def}
F_n=\ell_{n,1},\,\,\,\, L_n=\ell_{n,k_n},\,\,\,\, S_n=\sum_{i=1}^{k_n} 2^{\ell_{n,i}-F_n}\quad\, (n\in\N).
\end{equation}
Note that $t_n\to 0$ if and only if $F_n\to \infty$. Observe also that $S_n=1+\sum_{m\in M}2^m$ for some $M\subseteq\{1,2,\ldots,L_n-F_n\}$, hence $1\leq S_n\leq 2^{L_n-F_n+1}-1$ from which it follows that $2^{-L_n}S_n\to 0$ as $n\to\infty$. Applying \eqref{piii} to $t_n$ we obtain $q(t_n)=\tau(\pi_{L_n}^{S_n})$ for each $n\in\N$. Therefore, for any $\mathbf{x}=[(x_j)]_\sim\in\mathscr{W}^{\usim}$ and $n\in\N$, we have
\begin{equation*}
    \begin{split}
        \mathbf{x}-\gamma(q(t_n))\mathbf{x} &=\mathbf{x}-\gamma\Bigg\{\bigoplus_{k=1}^\infty\pi_{L_n}(\xi_k)^{S_n} I_{\Hh_k}\Bigg\}\mathbf{x}\\[2ex]
        &=\Bigg[\Big(\bigoplus_{k=1}^\infty\big(1-\pi_{L_n}(\xi_k)^{S_n}\big) I_{\Hh_k}x_j\Big)_{\!\!j=1}^{\!\!\infty}\Bigg]_\sim\\[2ex]
        &=\Bigg[\Big(\sum_{k=1}^\infty\big(1-\pi_{L_n}(\xi_k)^{S_n}\big)P_kx_j\Big)_{\!\!j=1}^{\!\!\infty}\Bigg]_\sim,
    \end{split}
\end{equation*}
where $P_k$ is the orthogonal projection onto $\Hh_k$. Set $\omega_{j,k}=\n{P_kx_j}^2$ and notice that if $(x_j)$ runs through the collection of all weakly null sequences in $\Hh$, then $\oo{\mathbf{\omega}}=(\omega_{j,k})_{j,k\in\N}$ is a~matrix of nonnegative entries such that $\sup_j\sum_{k=1}^\infty \omega_{j,k}<\infty$, as $\sum_{k=1}^\infty\omega_{j,k}=\sum_{k=1}^\infty\n{P_kx_j}^2=\n{x_j}^2$ for every $j\in\N$. Conversely, since all the $\Hh_k$ are infinite-dimensional, every such matrix corresponds to some weakly null sequence $(x_j)\subset\Hh$. Therefore, the condition
\begin{equation}\label{condition_x}
    \lim_{n\to\infty}\|\mathbf{x}-\gamma(q(t_n))\mathbf{x}\|_{\mathbb{H}}=0\quad\mbox{for every }\,\,\mathbf{x}\in\mathscr{W}^{\usim}
\end{equation}
is equivalent to saying that
\begin{equation}\label{condition_LIM}
    \lim_{n\to\infty}\mathop{\mathrm{LIM}}_{j,\UU}\,\sum_{k=1}^\infty\omega_{j,k}\vert 1-\pi_{L_n}(\xi_k)^{S_n}\vert^2=0
\end{equation}
for every matrix $\oo{\omega}$ as described above. 

For any $n\in\N$, the fixed dense set $\{\xi_k\colon k\in\N\}\subset X$ and $(t_n)\subset\mathbb{D}$ with $t_n\to 0$, define 
\begin{equation}\label{xi_def}
\boldsymbol{\xi}^{(n)}=\big(\vert 1-\pi_{L_n}(\xi_k)^{S_n}\vert\big)_{k=1}^\infty\in\ell_\infty;
\end{equation}
notice that this is indeed a~bounded sequence, as
\begin{equation}\label{est_l}
    \begin{split}
        \n{\boldsymbol{\xi}^{(n)}}_\infty &=\sup_{k\in\N}\vert 1-\pi_{L_n}(\xi_k)^{S_n}\vert\\
        &=\sup\big\{\vert 1-z^{S_n}\vert\colon z\in\exp(2^{-L_n}Z)\big\}\leq 1+\exp(2^{-L_n}S_n\zeta).
    \end{split}
\end{equation}
Putting $\omega_{j,k}=c_k$ for all $j,k\in\N$ in condition \eqref{condition_LIM}, where $c_k\geq 0$ and $\sum_{k=1}^\infty c_k=1$, \vspace*{-2pt}we obtain $\sum_{k=1}^\infty c_k(\boldsymbol{\xi}^{(n)}_k)^2\to 0$ as $n\to\infty$ from which it follows that $(\boldsymbol{\xi}^{(n)})_{n=1}^\infty$ converges to zero in the weak$^\ast$ topology on $\ell_\infty$. Note that if $\eta=-\infty$, we would have $0\in X_n$ for each $n\geq 0$, hence $\boldsymbol{0}=(0,0,\ldots)\in X$ and taking e.g. $\xi_1=\boldsymbol{0}$ we see that the $(\boldsymbol{\xi}^{(n)})_{n=1}^\infty$ would not converge weak$^\ast$ to zero, hence we must have $\eta>-\infty$. Finally, for any $\zeta<\infty$, \eqref{est_l} shows that $\sup_{n\in\N}\n{\boldsymbol{\xi}^{(n)}}_\infty$ is finite, thus the weak$^\ast$ convergence of this sequence is equivalent to coordinatewise convergence. This finishes the proof of assertion (i).

\vspace*{2mm}
Now, we shall prove assertion (ii). According to Lemma~\ref{cont1_L}, we have $\Theta\in\Ext_{C_0,\gamma}(X)$ under a~$\mathscr{V}\!$-Calkin representation $\gamma$ if and only if condition \eqref{condition_x} is valid for every sequence $(t_n)_{n=1}^\infty\subset\mathbb{D}$ with $t_n\to 0$. This is in turn equivalent to condition \eqref{condition_LIM} being valid for:
\begin{itemize}[leftmargin=24pt]
\setlength{\itemsep}{2pt}
\item every $\mathscr{V}\in\beta\N\setminus\N$ in place of $\UU$, 

\item every matrix $\oo{\omega}=(\omega_{j,k})_{j,k\in\N}$ with nonnegative entries such that $\sup_j\sum_{k=1}^\infty \omega_{j,k}<\infty$,

\item and every dense set $\{\xi_k\colon k\in\N\}\subset X$.
\end{itemize}
Of course, it does not depend on the particular choice of $\{\xi_k\colon k\in\N\}$, as we know that the property of inducing a~\SOT-continuous semigroup is preserved by taking an~equivalent extension (see Remark~\ref{C0_correct_R}). Moreover, the first two quantifiers can be replaced by saying simply that 
\begin{equation}\label{for_every_V}
\lim_{n\to\infty}\mathop{\mathrm{LIM}}_{k,\mathscr{V}}\,\boldsymbol{\xi}^{(n)}_k=0\quad\mbox{for every }\,\,\mathscr{V}\in\beta\N.
\end{equation}
To see this, note that for every choice of $\mathscr{V}$ and $\oo{\omega}$, the map $\ell_\infty \ni\boldsymbol{\xi}\mapsto\mathop{\mathrm{LIM}}_{j,\mathscr{V}}\sum_{k=1}^\infty \omega_{j,k}\boldsymbol{\xi}_k$ is an element of $\ell_\infty^\ast$, therefore \eqref{condition_LIM} assumed for all choices of $\mathscr{V}$ and $\oo{\omega}$ is (formally) weaker than saying that $(\boldsymbol{\xi}^{(n)})_{n=1}^\infty$ converges weakly to zero (here, we can obviously omit the square). Taking $\omega_{j,k}=1$ for any $j=k\in\N$ and $\omega_{j,k}=0$ otherwise, we see that \eqref{condition_LIM} implies \eqref{for_every_V} (we can include $\mathscr{V}\in\N$ in that condition, as we have already observed that \eqref{condition_LIM} implies the coordinatewise convergence). On the other hand, assuming \eqref{for_every_V} and using the 
fact that boundedness and pointwise convergence of sequences in $C(K)$-spaces\vspace*{-1pt} implies weak convergence (see, e.g., \cite[Cor.~3.138]{FHHMZ}), we conclude\vspace*{-1pt} that since $\lim_{n\to\infty}\boldsymbol{\xi}^{(n)}(\mathscr{V})$ for every $\mathscr{V}\in\beta\N$ and $(\boldsymbol{\xi}^{(n)})_{n=1}^\infty$ is uniformly bounded, \eqref{for_every_V} yields $\boldsymbol{\xi}^{(n)}\xrightarrow[]{w}0$.

Recall that the parameters $L_n$ and $S_n$ were defined in terms of $t_n\in\mathbb{D}$ and since $(t_n)_{n=1}^\infty$ was an~arbitrary sequence of positive dyadic numbers converging to zero, it is easily seen that condition \eqref{for_every_V} must be valid for $(\boldsymbol{\xi}^{(n)})_{n=1}^\infty$ defined by \eqref{xi_def} with arbitrary sequences $(L_{n})_{n=1}^\infty, (S_n)_{n=1}^\infty\subset\N$ satisfying $2^{-L_n}S_n\to 0$. Indeed, given $(L_n)_{n=1}^\infty$ and $(S_n)_{n=1}^\infty$ as above, formulas \eqref{FLS_def} uniquely determine a~sequence $(t_n)_{n=1}^\infty\subset\mathbb{D}$ with $2^{-L_n}S_n\geq 2^{-F_n}$, hence $F_n\to\infty$ which means that $t_n\to 0$.

Conversely, if for all sequences $(L_{n})_{n=1}^\infty, (S_n)_{n=1}^\infty\subset\N$ satisfying $2^{-L_n}S_n\to 0$ and any dense set $\{\xi_k\colon k\in\N\}\subset X$, we have $\boldsymbol{\xi}^{(n)}\xrightarrow[]{w}0$, then \eqref{condition_x} holds true for every $(t_n)_{n=1}^\infty\subset\mathbb{D}$ and $\gamma$ being any $\mathscr{V}\!$-Calkin representation, which, as we know from Lemma~\ref{cont1_L}, guarantees that $\Theta\in\Ext_{C_0,\gamma}(X)$. This finishes the proof of assertion (ii).
\end{proof}

It is known that weak convergence in $B(\Sigma)$-spaces, that is, Banach spaces of bounded measurable functions, where $\Sigma$ is a~$\sigma$-algebra on some set $A$, can be characterized in terms of quasi-uniform convergence; see \cite[\S VI.6]{DS}. Namely, a~sequence $(f_n)_{n=1}^\infty\subset B(\Sigma)$ is weakly null if and only if it is bounded, pointwise convergent to zero and every its subsequence $(f_{n_k})_{k=1}^\infty$ satisfies the following condition: for each $\e>0$ and $k_0\in\N$ there are indices $k_0\leq k_1<\ldots<k_j$ such that $\min_{1\leq i\leq j}\vert f_{n_{k_i}}(a)\vert<\e$ for every $a\in A$ (see \cite[Thm.~VI.6.31]{DS}). We can therefore reformulate the characterization in assertion (ii) into a~more directly applicable form.

For the rest of this section we assume that $X$ is an~admissible compact metric space as described in Proposition~\ref{C0_i_iii_T}.

\begin{corollary}\label{DS_corollary}
We have $\Theta\in\Ext_{C_0,\gamma}(X)$ for all $\mathscr{V}\!$-Calkin representations, with an~arbitrary $\mathscr{V}\in\beta\N\setminus\N$, if and only if the following two conditions hold true:
\begin{itemize}[leftmargin=24pt]
\setlength{\itemsep}{3pt}
\item[{\rm (a)}] $\lim_{n\to\infty}\vert 1-\pi_n(\xi)\vert=0$ for every $\xi\in X$;

\item[{\rm (b)}] for any sequences $(L_n)_{n=1}^\infty$, $(S_n)_{n=1}^\infty$ of positive integers satisfying $\lim_{n\to\infty} 2^{-L_n}S_n=0$, and $\e>0$ and $n_0\in\N$, there exist indices $n_0\leq n_1<\ldots<n_k$ such that
\begin{equation}\label{min_e}
\min\limits_{1\leq i\leq k}\vert 1-\pi_{L_{n_i}}(\xi)^{S_{n_i}}\vert<\e\quad\mbox{for every }\,\, \xi\in X.
\end{equation}
\end{itemize}
\end{corollary}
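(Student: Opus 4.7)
The plan is to apply Proposition \ref{C0_i_iii_T}(ii) to reduce the statement to the assertion that conditions (a) and (b) are jointly equivalent to the weak convergence $\boldsymbol{\xi}^{(n)}\to 0$ in $\ell_\infty$ for every admissible pair of sequences $(L_n),(S_n)\subset\N$ (meaning $2^{-L_n}S_n\to 0$) and every dense set $\{\xi_k\}\subset X$, where $\boldsymbol{\xi}^{(n)}_k=\lvert 1-\pi_{L_n}(\xi_k)^{S_n}\rvert$. Since $\ell_\infty$ is a $B(\Sigma)$-space for $\Sigma$ the power set of $\N$, the Dunford--Schwartz criterion recalled just above the corollary splits this weak null condition into three requirements: uniform boundedness, pointwise convergence to zero, and the subsequential quasi-uniform minimum property.

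Uniform boundedness is automatic from estimate \eqref{est_l}. The quasi-uniform part matches condition (b), modulo two routine points: any subsequence of an admissible pair is itself admissible, so the ``for every subsequence'' clause of Dunford--Schwartz reduces to ``for every admissible pair''; and the function $\xi\mapsto\min_i\lvert 1-\pi_{L_{n_i}}(\xi)^{S_{n_i}}\rvert$ is continuous on the compact space $X$, so density together with a harmless halving of $\e$ converts a bound $<\e$ on $\{\xi_k\}$ into one valid on all of $X$, and conversely. The pointwise part on the dense set delivers condition (a) upon invoking the ``equivalently, every dense subset'' clause of Proposition \ref{C0_i_iii_T}(ii): given $\xi\in X$, place $\xi$ into a countable dense subset and read off $\lvert 1-\pi_n(\xi)\rvert\to 0$ at its index by specializing to $L_n=n,\,S_n=1$.

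The main obstacle is the reverse reduction: showing that condition (a), which only asserts $\lvert 1-\pi_n(\xi)\rvert\to 0$, already implies pointwise convergence $\boldsymbol{\xi}^{(n)}_k\to 0$ for \emph{all} admissible $(L_n),(S_n)$. Here I would exploit the inverse limit relation $\pi_{m+1}(\xi_k)^2=\pi_m(\xi_k)$. Splitting $S_n=2^{a_n}q_n$ with $q_n$ odd, the hypothesis $2^{-L_n}S_n\to 0$ gives $a_n\leq L_n$ eventually together with $2^{-M_n}q_n=2^{-L_n}S_n\to 0$ for $M_n := L_n-a_n\to\infty$, whence $\pi_{L_n}(\xi_k)^{S_n}=\pi_{M_n}(\xi_k)^{q_n}$. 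Writing $\pi_m(\xi_k)=\rho_m e^{i\theta_m}$ with $\theta_m\in(-\pi,\pi]$, the square-root relation forces $\theta_{m+1}\equiv\theta_m/2\pmod\pi$; since (a) demands $\theta_m\to 0$, for large $m$ only the solution $\theta_{m+1}=\theta_m/2$ stays near zero, so $\theta_m=O(2^{-m})$. Moreover (a) itself forces $\eta>-\infty$, for otherwise $(0,0,\ldots)$ would belong to $X$ and violate (a); hence $\rho_m\in[\exp(2^{-m}\eta),\exp(2^{-m}\zeta)]$ yields $\lvert\rho_m-1\rvert=O(2^{-m})$. Combining $\theta_{M_n}=O(2^{-M_n})$ and $\lvert\rho_{M_n}-1\rvert=O(2^{-M_n})$ with $2^{-M_n}q_n\to 0$ then gives $\rho_{M_n}^{q_n}\to 1$ and $q_n\theta_{M_n}\to 0$, whence $\pi_{M_n}(\xi_k)^{q_n}\to 1$ as desired.
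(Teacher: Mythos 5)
Your proof is correct and takes essentially the same route as the paper: reduce via Proposition~\ref{C0_i_iii_T}(ii) to weak nullity of the sequences $\boldsymbol{\xi}^{(n)}$ in $\ell_\infty$, then apply the Dunford--Schwartz quasi-uniform convergence criterion, with boundedness supplied by \eqref{est_l} and the passage between the dense set and all of $X$ handled by continuity. Your third paragraph --- extracting the rate $\abs{1-\pi_n(\xi)}=O(2^{-n})$ from (a) and deducing $\boldsymbol{\xi}^{(n)}_k\to 0$ for all admissible $(L_n),(S_n)$ --- is correct but duplicates work the paper defers to Lemma~\ref{rate_L} and the proof of Theorem~A; for the corollary itself the pointwise leg of the Dunford--Schwartz criterion already follows from (b) alone, since any $\xi$ with $\vert 1-\pi_{L_n}(\xi)^{S_n}\vert\not\to 0$ would give a subsequence (itself an admissible pair) along which the minimum in \eqref{min_e} stays bounded away from zero.
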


\begin{proof}
The necessity follows directly from Proposition~\ref{C0_i_iii_T}(ii) which yields \eqref{min_e} for $\xi_k$ from a~dense subset of $X$ in place of $\xi$, but this, of course, implies that the estimate is valid for every $\xi\in X$. For sufficiency, recall that by \eqref{est_l}, the assumption $\zeta=\sup_{z\in Z}\mathrm{Re}\,z<\infty$ implies that for any dense set $\{\xi_k\colon k\in\N\}\subset X$ and any $(L_n)_{n=1}^\infty$, $(S_n)_{n=1}^\infty$ \vspace*{-1pt}as above, the sequence $(\boldsymbol{\xi}^{(n)})_{n=1}^\infty\subset\ell_\infty$ given by \eqref{xi_def} is bounded. Also, since we allow $(L_n)_{n=1}^\infty$ and $(S_n)_{n=1}^\infty$ to be arbitrary sequences satisfying $2^{-L_n}S_n\to 0$, every subsequence of $(\boldsymbol{\xi}^{(n)})_{n=1}^\infty$ has the property of quasi-uniform convergence. Therefore, it is weakly convergent to zero due to the above quoted characterization \cite[Thm.~VI.6.31]{DS}. The~result now follows from Proposition~\ref{C0_i_iii_T}(ii).
\end{proof}

We are almost ready to prove Theorem A; we just need one more technical lemma.
\begin{lemma}\label{rate_L}
Suppose that for some $\xi \in X$, we have $\lim_{n\to\infty}\pi_n(\xi)=1$. Then
$$
\abs{1-\pi_n(\xi)}=O(2^{-n}).
$$
Hence, if condition {\rm (a)} of Corollary~\ref{DS_corollary} holds true, then for every $\xi\in X$ there is a~constant $C=C(\xi)>0$ such that $\abs{1-\pi_n(\xi)}\leq C 2^{-n}$ for $n\in\N$. 
\end{lemma}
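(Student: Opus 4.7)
The plan is to linearize the squaring recursion $\pi_{n+1}(\xi)^2 = \pi_n(\xi)$ by passing to the complex logarithm. Writing $\lambda_n = \pi_n(\xi)$, the hypothesis $\lambda_n \to 1$ lets one fix $N_0$ so large that $|\lambda_n - 1| < 1/2$ for $n \geq N_0$, which places $\lambda_n$ safely in the domain of the principal branch; setting $w_n = \log \lambda_n$, one has $w_n \to 0$.

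The crux is to show $2 w_{n+1} = w_n$ for all $n \geq N_0$. Since $e^{2w_{n+1}} = \lambda_{n+1}^2 = \lambda_n = e^{w_n}$, one gets $2 w_{n+1} - w_n \in 2\pi \ii \Z$; enlarging $N_0$ if necessary so that $|w_n| < \pi/3$ for $n \geq N_0$, the quantity $2 w_{n+1} - w_n$ has modulus strictly less than $\pi$, so the only admissible integer multiple of $2\pi\ii$ is $0$. Iterating $w_{n+1} = w_n/2$ then yields $w_n = 2^{-(n-N_0)} w_{N_0}$ for every $n \geq N_0$.

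The elementary inequality $|1 - e^z| \leq 2|z|$ valid for $|z| \leq 1$ then gives
$$
|1 - \lambda_n| = |1 - e^{w_n}| \leq 2|w_n| \leq 2^{N_0+1}|w_{N_0}|\cdot 2^{-n} \qquad (n \geq N_0),
$$
which is precisely the $O(2^{-n})$ bound. For the second assertion, note that because $\RE\, z \leq \zeta$ on $Z$, every coordinate satisfies $|\pi_n(\xi)| \leq e^{2^{-n}\zeta} \leq e^{\max(\zeta,0)}$, so $|1 - \pi_n(\xi)|$ is uniformly bounded in $n$ by some $M > 0$. Under condition~(a) the first part applies to every $\xi$, and the finitely many indices $n < N_0(\xi)$ can be absorbed via $|1 - \pi_n(\xi)| \leq M \leq M\cdot 2^{N_0(\xi)}\cdot 2^{-n}$, so enlarging the constant accordingly produces a single $C(\xi)$ working for all $n \in \N$.

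The only real step to justify is the passage from $e^{2w_{n+1}} = e^{w_n}$ to $w_{n+1} = w_n/2$; this is precisely where the hypothesis $\lambda_n \to 1$ is used to rule out the stray $2\pi\ii k$-terms. Once that is in hand, the desired geometric rate $2^{-n}$ drops out from a single iteration, and no further work is needed.
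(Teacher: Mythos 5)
Your proof is correct and follows essentially the same route as the paper: both hinge on the observation that once $\pi_n(\xi)$ is close to $1$, the relation $\pi_{n+1}(\xi)^2=\pi_n(\xi)$ forces the principal square root, after which the deviation from $1$ halves at each step. Your logarithmic packaging (showing $2w_{n+1}-w_n\in 2\pi\ii\Z$ must vanish by a size estimate) is in fact a cleaner and more explicit justification of the principal-branch claim that the paper treats as immediate, and it handles modulus and argument in one stroke where the paper estimates them separately.
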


\begin{proof}
First, note that our assumption implies that $\eta=\inf_{z\in Z} \mathrm{Re}\,z>-\infty$ as otherwise $(0,0,\ldots)\in X$. Since the connecting maps $f_n\colon X_{n+1}\to X_n$ are given by $f_n(z)=z^2$, we immediately see that if $\pi_n(\xi)\to 1$, then there must exists $n_0\in\N$ such that for every $n\geq n_0$, $\pi_{n+1}(\xi)=\sqrt{\pi_n(\xi)}$ is given by the principal branch of square root. In other words, 
$$
\pi_{n+1}(\xi)=\sqrt{\vert \pi_n(\xi)\vert}\, e^{\ii\mathrm{Arg} (\pi_n(\xi))/2} \quad (n\geq n_0),
$$
hence, 
$$
\abs{\pi_{n+k}(\xi)}=\abs{\pi_n(\xi)}^{2^{-k}}\quad\mbox{and}\quad \mathrm{Arg}\,\pi_{n+k}(\xi)=2^{-k}\mathrm{Arg}\,\pi_n(\xi) \quad (n\geq n_0,\, k\in\N).
$$
Since for any $m\in\N$ we have $\abs{\pi_m(\xi)}\leq\max\{1,\exp(\zeta)\}$, it follows that with suitable constants $C_1, C_2>0$ we have 
$$2^{-k}C_1\geq 2^{-k}\mathrm{log}\,\abs{\pi_n(\xi)}=\mathrm{log}\,\abs{\pi_{n+k}(\xi)}\geq \frac{\abs{\pi_{n+k}(\xi)}-1}{\abs{\pi_{n+k}(\xi)}}\geq C_2(\abs{\pi_{n+k}(\xi)}-1)
$$
Similarly, if $\abs{\pi_{n}(\xi)}<1$, then using the inequality $\abs{\pi_m(\xi)}\geq\min\{1,\exp(\eta)\}$ we obtain
$$
2^{-k}C_3\geq -2^{-k}\log\,\abs{\pi_n(\xi)}=-\log\,\abs{\pi_{n+k}(\xi)}\geq 1-\abs{\pi_{n+k}(\xi)}.
$$
with some $C_3>0$. Therefore, there is a~constant $C>0$ such that $\vert\abs{\pi_{n+k}(\xi)}-1\vert\leq C2^{-k}$ for all $n\geq n_0$, $k\in\N$. Therefore, for $n=n_0$ and any $k\in\N$, we have 
\begin{equation*}
\begin{split}
\abs{1-\pi_{n_0+k}(\xi)} &\leq \big|1-\abs{\pi_{n_0+k}(\xi)}\big|+\abs{\pi_{n_0+k}(\xi)}\!\cdot\!\big|1-\exp(\ii\mathrm{Arg}\,(\pi_{n_0+k}(\xi)))\big|\\
& \leq \big(C+\max\{1,\exp(\zeta)\}\!\cdot\!\abs{\mathrm{Arg}\,\pi_{n_0}(\xi)}\big)\cdot 2^{-k},
\end{split}
\end{equation*}
which proves our assertion.
\end{proof}

\begin{proof}[Proof of Theorem A]
In view of Proposition~\ref{C0_i_iii_T} and Corollary~ \ref{DS_corollary}, it suffices to show that the pointwise convergence of $(\pi_n)_{n=0}^\infty$ to $\mathbf{1}$ implies condition (b) of Corollary~\ref{DS_corollary}. Fix $\e>0$ and any sequences $(L_n)_{n=1}^\infty$ and $(S_n)_{n=1}^\infty$ of positive integers such that $2^{-L_n}S_n\to 0$. Define
$$
U_j=\big\{\xi\in X\colon\abs{1-\pi_{L_j}(\xi)^{S_j}}<\e\big\}\quad\,\, (j\in\N).
$$
Notice that since
$$
\abs{1-\pi_{L_j}(\xi)^{S_j}}=\abs{1-\pi_{L_j}(\xi)}\!\cdot\!\abs{1+\pi_{L_j}(\xi)+\ldots+\pi_{L_j}(\xi)^{S_j-1}}
$$
and $\vert\pi_{L_j}(\xi)\vert\leq\max\{1,\exp(2^{-L_j}\zeta)\}$, we have 
$$
\abs{1-\pi_{L_j}(\xi)^{S_j}}\leq\left\{\begin{array}{ll}
\abs{1-\pi_{L_j}(\xi)}\!\cdot\! S_j & \mbox{if }\,\,\zeta\leq 0\\[4pt]
\displaystyle{\abs{1-\pi_{L_j}(\xi)}\!\cdot\!\frac{\exp(2^{-L_j}S_j\zeta)-1}{\exp(2^{-L_j}\zeta)-1}} & \mbox{if }\,\,\zeta> 0.
\end{array}\right.
$$
Hence, by Lemma~\ref{rate_L}, in both cases we have 
$$
\abs{1-\pi_{L_j}(\xi)^{S_j}}=O(2^{-L_j}S_j)\xrightarrow[j\to\infty]{}0.
$$
Therefore, $\ccup_{j\geq n_0} U_j=X$ and there exists a~finite subcover $\{U_{n_1},\ldots, U_{n_k}\}$ of $X$, where $n_0\leq n_1<\ldots<n_k$. This means exactly that condition \eqref{min_e} is satisfied.
\end{proof}

\subsection{Automatic continuity}
\begin{lemma}\label{point_unif_L}
Let $X=\varprojlim X_n$ be an admissible compact metric space with $X_n$ given\vspace*{-1pt} by \eqref{adm_Z_D}, where $Z\subset\C$ is a~closed set with $\zeta\coloneqq \sup_{z\in Z}\mathrm{Re}\, z<\infty$ such that conditions {\rm (H$_1$)} and {\rm (H$_2$)} hold true. Then, $\pi_n\yrightarrow[][2pt]{}[2pt]\mathbf{1}$ pointwise on $X$ if and only if $\pi_n\yrightarrow[][0pt]{}[0pt]\mathbf{1}$ uniformly on $X$.
\end{lemma}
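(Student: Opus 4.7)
The implication $(\Leftarrow)$ is immediate, so the content is $(\Rightarrow)$. The idea is to use (H$_1$) and (H$_2$) to promote the pointwise rate bound of Lemma~\ref{rate_L}---which gives $|1-\pi_n(\xi)|\leq D\cdot 2^{n_0(\xi)-n}$ with $D$ depending only on $\zeta$---to a uniform one by showing that the entry threshold $n_0(\xi)$ can be chosen uniformly on $X$.

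First, by (H$_1$), fix $n_1\in\N$ so that $X_n=\exp(2^{-n}Z)$ for every $n\geq n_1$, so each $\pi_n(\xi)$ admits a representative $w\in Z$. By (H$_2$), put $N_0:=\max\bigcup_{t\in[\eta,\zeta]}M_t$ and $N:=\max(n_1,N_0)+1$. The key rigidity extracted from (H$_2$) is the following: for every $m\geq N$ and every $\xi\in X$, at most one of the two square roots of $\pi_m(\xi)$ lies in $X_{m+1}$. Indeed, if both did, then writing $\pi_m(\xi)=\exp(2^{-m}w)$ with $w\in Z$, the other square root would equal $\exp(2^{-(m+1)}(w+2^{m+1}\pi i))$, forcing some $w'\in Z$ with $\mathrm{Re}\,w'=\mathrm{Re}\,w$ and $w'-w\in 2^{m+1}\pi i\cdot(2\Z+1)$, so that $m+1\in M_{\mathrm{Re}\,w}$, contradicting $m+1>N_0$. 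Consequently, for $m\geq N$ the ``forced square-root'' maps $g_{m+1}\colon X_m\to X_{m+1}$ sending each element to its unique square root in $X_{m+1}$ are well defined and continuous, and the tail $(\pi_m(\xi))_{m\geq N}$ of every $\xi\in X$ is determined by $\pi_N(\xi)$.

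The upgrade from pointwise to uniform convergence is then obtained by a compactness argument on the compact metric space $X_N$. Setting $G_m:=g_m\circ\cdots\circ g_{N+1}\colon X_N\to X_m$, one has $\pi_m(\xi)=G_m(\pi_N(\xi))$ and the pointwise hypothesis reads $G_m(\zeta)\to 1$ for every $\zeta\in X_N$. The open sets
\[
V_m:=\{\zeta\in X_N\colon\mathrm{Re}\,G_k(\zeta)>0\ \text{for all }k\geq m\}
\]
then cover $X_N$; by compactness, some single $m=n_\star\geq N$ suffices. Combined with the rigidity step and the elementary observation that only the principal square root of a right-half-plane point has positive real part, this forces, for all $n\geq n_\star$ and all $\xi\in X$, $\pi_{n+1}(\xi)$ to be the principal square root of $\pi_n(\xi)$. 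Feeding the uniform $n_\star$ back into Lemma~\ref{rate_L} (and using the trivial bound $|\mathrm{Arg}\,\pi_{n_\star}(\xi)|\leq\pi$) yields $\sup_{\xi\in X}|1-\pi_n(\xi)|\leq C\cdot 2^{-n}$ with $C$ independent of $\xi$, i.e. uniform convergence.

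The main obstacle is justifying that the sets $V_m$ really are open in $X_N$ and do cover it; this ultimately amounts to checking that once $G_m(\zeta)$ is in the right half-plane the forced square root in $X_{m+1}$ is automatically the principal one, so that the tail cannot jump back into the left half-plane---this is the step where (H$_2$) interacts most delicately with the iterated forced-branch structure given by (H$_1$).
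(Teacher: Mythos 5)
Your reduction to the claim that, from some uniform level on, the forced square root must be the principal one is the right way to frame the problem, and your bookkeeping with $M_t$ (both roots of $\pi_m(\xi)$ in $X_{m+1}$ forces $m+1\in M_{\mathrm{Re}\,w}$) is correct. But the step you yourself flag as ``the main obstacle'' is a genuine gap, and it cannot be closed by the means you propose. The assertion that ``once $G_m(\zeta)$ is in the right half-plane the forced square root in $X_{m+1}$ is automatically the principal one'' is false: the forced root is whichever of the two roots $\pm w$ happens to lie in $X_{m+1}=\exp(2^{-(m+1)}Z)$, and that is a global property of $Z$, not a consequence of where $G_m(\zeta)$ sits. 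Concretely, take $Z=\{0\}\cup\{2\pi\ii(2^k+\theta_k)\colon k\geq 1\}$ with distinct $\theta_k\in(0,\tfrac14)$ decreasing to $0$. Then each $\exp(2^{-n}Z)$ is a convergent sequence together with its limit $1$, hence closed, so (H$_1$) holds; no difference of two elements of $S_0$ is an integer multiple of $\pi$ except $0$, so every $M_t$ is empty and (H$_2$) holds; and $X$ consists of the elements $\xi^{(k)}$ with $\pi_n(\xi^{(k)})=\exp\!\big(2\pi\ii\,2^{-n}(2^k+\theta_k)\big)$ together with $\mathbf{1}$, so $\pi_n\to\mathbf{1}$ pointwise. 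Yet $\pi_k(\xi^{(k)})=\exp(2\pi\ii\,2^{-k}\theta_k)$ is arbitrarily close to $1$ (right half-plane) while its forced square root $\pi_{k+1}(\xi^{(k)})=\exp\!\big(2\pi\ii(\tfrac12+2^{-k-1}\theta_k)\big)$ is arbitrarily close to $-1$. So the tail \emph{can} jump back into the left half-plane, your sets $V_m$ (being countable intersections of open sets) are not open, and the finite-subcover argument collapses. Note that this example in fact satisfies all the hypotheses of the lemma while $\sup_{\xi}|1-\pi_{k+1}(\xi)|\to 2$, so the difficulty is not an artifact of your particular strategy: no argument can bridge this step without strengthening the hypotheses, and you should treat the implication ``no antipodal pairs at each level $\Rightarrow$ uniform convergence of $\mathrm{Arg}\,\pi_n$'' with suspicion rather than as a technical verification.

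For comparison, the paper argues differently in form --- it characterizes $\mathsf{A}(\mathcal{C}_n(t))\neq\varnothing$ via condition $(\star)$ and concludes from $\mathsf{A}(X_n)=\varnothing$ for $n\geq n_1$ that $\mathrm{Arg}\,\pi_n(\xi)\to 0$ uniformly --- but it is exactly as laconic at the corresponding final step as you are, and offers no mechanism for converting the levelwise absence of antipodal pairs into a uniform choice of principal branch. So you have correctly isolated the crux; what is missing in your write-up (and what the example above shows cannot be supplied in the generality claimed) is the actual argument. If you want a correct statement, you need a hypothesis that controls not only exact antipodality in $S_t-S_t$ (which is what $(\star)$ records) but the whole distance of $2^{-n}S_t$ to $2\pi\Z$, e.g.\ an assumption forcing $\sup_{u\in S_t}\dist(2^{-n}u,2\pi\Z)\to 0$; under the hypotheses as stated, the pointwise-to-uniform upgrade does not go through.
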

\begin{proof}
Assume that $\pi_n(\xi)\to 1$ as $n\to\infty$ for every $\xi\in X$. As we have observed earlier, this implies that $\eta\coloneqq\inf_{z\in Z}\mathrm{Re}\, z>-\infty$. Therefore,
$$
e^{2^{-n}\eta}\leq\abs{z}\leq e^{2^{-n}\zeta}\quad (n=0,1,2,\ldots, \,\,\, z\in X_n),
$$
which shows that $\abs{\pi_n(\xi)}\to 1$ uniformly for $\xi\in X$. Hence, it suffices to show that $\mathrm{Arg}\,\pi_n(\xi)$ converges to zero uniformly for $\xi\in X$ as $n\to\infty$. 

To this end, we introduce the following definitions. First, recall that for any $t\in [\eta,\zeta]$ we have defined the vertical section $S_t$ of $Z$ by $S_t=\{u\in\R\colon t+\ii u\in Z\}$. We also define the $n^{\mathrm{th}}$ circle section corresponding to $t$ by
$$
\mathcal{C}_n(t)=X_n\cap\big\{\abs{z}=e^{2^{-n}t}\big\}.
$$
Notice that $
\exp(2^{-n}S_t)\subseteq\mathcal{C}_n(t)$, and that this inclusion may be strict for a~general closed set $Z\subset\C$, as there may be points in $Z$ participating in the set $\mathcal{C}_n(t)$ other than those for which $\mathrm{Re}\,z=t$. However, by assumption (H$_1$), $\exp(2^{-n}Z)$ is closed, thus $\mathcal{C}_n(t)=\exp(2^{-n}S_t)$ for all $t\in [\eta,\zeta]$ and all $n$ sufficiently large, say $n\geq n_0$.

For any set $B\subset\C$, denote by $\mathsf{A}(B)$ the set of points that belong to $B$ together with its antipode, i.e. $\mathsf{A}(B)=\{z\in B\colon -z\in B\}$. Of course, $\mathsf{A}(B)$ is closed, provided that $B$ is closed.

\vspace*{2mm}\noindent
\underline{{\it Claim.}} Let $t\in [\eta,\zeta]$ and $n\in\N$, $n\geq n_0$. We have $\mathsf{A}(\mathcal{C}_n(t))\neq\varnothing$ if and only if
$$
\big(S_t+2^{n+1}\pi\Z\big)\cap\big(S_t-2^n\pi+2^{n+1}\pi\Z   \big)\neq\varnothing.\leqno(\star)
$$

\vspace*{2mm}\noindent
Indeed, $\mathsf{A}(\mathcal{C}_n(t))\neq\varnothing$ if and only if there is $\theta\in\R$ such that both numbers $\exp(2^{-n}t+2^{-n}\ii\theta)$ and $\exp(2^{-n}t+(2^{-n}\theta+\pi)\ii)$ of modulus $e^{2^{-n}t}$ belong to $X_n=\exp(2^{-n}Z)$. This is in turn equivalent to the existence of two elements $t+\ii u, t+\ii v\in Z$ with 
$$
\left\{\begin{array}{rcl}
\exp(2^{-n}t+2^{-n}\ii\theta) & = & \exp(2^{-n}(t+\ii u))\\ \exp(2^{-n}t+(2^{-n}\theta+\pi)\ii) & = & \exp(2^{-n}(t+\ii v)).
\end{array}\right.
$$
Therefore, for some $k,\ell\in\Z$ we must have $\theta=u+2^{n+1}k\pi$ and $\theta+2^n\pi=v+2^{n+1}\ell\pi$. This is plainly equivalent to saying that $\theta$ belongs to the set at the left-hand side of ($\star$), as $u,v\in S_t$.

\vspace*{2mm}
In view of our Claim, we have $\mathsf{A}(\mathcal{C}_n(t))\neq\varnothing$ if and only if $n\in M_t$. By assumption (H$_2$), we infer that there is $n_1\in\N$ such that $\mathsf{A}(\mathcal{C}_n(t))=\varnothing$ for all $n\geq n_1$ and $t\in [\eta,\zeta]$. Since there are no pairs of antipodal points in any of the $X_n$ ($n\geq n_1$), we obtain $\mathrm{Arg}\,\pi_n(\xi)\to 0$ uniformly for $\xi\in X$, which concludes the proof.
\end{proof}

\begin{proof}[Proof of Theorem B]
Since $\Ext(X)=0$, the extension generated by $(q(t))_{t\geq 0}$ by means of Proposition~\ref{P_spectrum} is the trivial one. By Theorem~A and Lemma~\ref{point_unif_L}, we have $\pi_n\to\mathbf{1}$ uniformly on $X$. The corresponding Busby invariant $\tau\colon C(X)\to\QQ(\Hh)$ satisfies $\tau(\pi_n)=q(2^{-n})$ for $n\in\N$, thus $\n{q(2^{-n})-I}\to 0$ as $n\to\infty$ which implies that $(q(t))_{t\geq 0}$ is uniformly continuous.
\end{proof}

\begin{proof}[Proof of Theorem C]
Plainly, the dyadic semigroup $(q(t))_{t\in\Dd}$ has a~lift to the $C_0$-semigroup $(Q(t))_{t\in\Dd}$ of normal operators with the generator $T$. Moreover, $\sigma(q(2^{-n}))=\sigma_{\mathrm{ess}}(Q(2^{-n}))$ for each $n\in\N$. By the assumption, $\sigma(T)=\sigma_{\mathrm{ess}}(T)$ and hence, by Nagy's spectral mapping theorem (see \cite{nagy}), we have $\sigma (q(2^{-n}))=\sigma (Q(2^{-n}))$ for each $n\in\N$. Therefore, the `lifting lemma' (Lemma~\ref{lifting_L}) yields that the extension induced by $(q(t))_{t\geq 0}$ is the zero extension. Again, by Theorem~A and Lemma~\ref{point_unif_L}, we have $\pi_n\to\mathbf{1}$ uniformly on $X$, which implies that $(q(t))_{t\geq 0}$ is uniformly continuous.
\end{proof}

\begin{example}
Let $Z=\{s+\ii\hspace*{1pt} t\colon (s,t)\in [-1,0]\times [-\pi,\pi]\}$. Then $X_n=\exp\small(2^{-n}Z\small)$ is the set of complex numbers $z$ with $e^{-2^{-n}}\leq\abs{z}\leq 1$ and $-2^{-n}\pi\leq \mathrm{Arg}\,z\leq 2^{-n}\pi$. Hence, for any $\xi\in X$ we have
$$
\vert 1-\pi_n(\xi)\vert=\max\big\{\vert 1-e^{2^{-n}\pi\ii}\vert,\,\vert 1-e^{-2^{-n}(1+\pi\ii)}\vert\big\}
$$
which for $n$ large enough (so that $2^{-n}$ is smaller than the smallest positive root of the equation $1+e^{-x}=2\cos\pi x$) equals 
$$
|1-e^{-2^{-n}(1+\pi\ii)}|=\sqrt{(1-e^{-2^{-n}})^2+2e^{-2^{-n}}(1-\cos 2^{-n}\pi)}=O(2^{-n}).
$$
Obviously, it follows from this example that for an~arbitrary compact set $Z\subset\C$, the semigroup $(q(t))_{t\in\Dd}$ induced by $X=\varprojlim X_n$ is uniformly continuous.
\end{example}

\begin{example}\label{T_not_cont_E}
As in Example~\ref{ex_imaginary}, let $Z=\ii\R$ and $X_n=\exp(2^{-n}Z)=S^1$ for $n=0,1,2,\ldots$ Notice that each element $\xi$ of $\Sigma_2=\varprojlim\{S^1,z^2\}$ is uniquely determined \vspace*{-1pt}by a~choice of $\pi_0(\xi)\in S^1$ and a~sequence $(\e_n)_{n=1}^\infty\in\{0,1\}^\N$, in the sense that for each $n\in\N$, if $\pi_{n-1}(\xi)=e^{it}$, then $\pi_{n}(\xi)=e^{\ii(t+2\e_n\pi)/2}$. Obviously, for almost all choices of $(\e_i)_{i=1}^\infty$, the sequence $(\pi_n(\xi))_{n=0}^\infty$ diverges, thus the pointwise convergence condition in Proposition~\ref{C0_i_iii_T}(i) is not satisfied. Hence, we have $\Theta\not\in\Ext_{C_0,\gamma}(\Sigma_2)$.
\end{example}

\begin{example}\label{c0not}
Take any sequence $(n_j)\subset\N$ with $n_j\nearrow\infty$ and let
$$
\alpha_j=\frac{\pi}{j}+2^{n_j+1}\pi,\quad \beta_j=\frac{\pi}{j}+3\cdot 2^{n_j}.
$$
Denote by $\mathbb{P}$ the set of primes and define 
$$
Z=\{\ii\alpha_j\colon j\in\mathbb{P},\, j\geq 3\}\cup \{\ii\beta_j\colon j\in\mathbb{P},\, j\geq 3\}.
$$
We claim that the sets of antipodal pairs in $X_N=\exp(2^{-N}Z)$ are given by
\begin{equation}\label{AS}
\mathsf{A}(X_{n_j})=\big\{\pm\exp(2^{-n_j}\alpha_j\ii)\big\}\quad\mbox{for }j\in\mathbb{P},\, j\geq 3
\end{equation}
and 
\begin{equation}\label{ASB}
\mathsf{A}(X_N)=\varnothing\quad\mbox{for }\,N\not\in\{n_2,n_3,n_5,\ldots\}.
\end{equation}
First, one easily verifies that the equality $\exp(2^{-N}\alpha_j\ii)=-\exp(2^{-N}\alpha_k\ii)$ is impossible for $j\neq k$. Similarly, if $\exp(2^{-N}\alpha_j\ii)=-\exp(2^{-N}\beta_k\ii)$, then $\tfrac{1}{j}+2^{n_j+1}=\tfrac{1}{k}+3\cdot 2^{n_k}+2^N+2^{N+1}r$ for some $r\in\Z$, thus $j=k$ and $2^{n_k}+2^N(1+2r)=0$ which implies $n_k=N$ and $r=-1$.

Observe also that two different angles $\alpha_j$, $\alpha_k$ cannot participate in a~single element $\xi\in X=\varprojlim X_N$. Indeed, if we had $\exp(2^{-M}\alpha_j\ii)=\exp(2^{-N}\alpha_k\ii)$, then 
$$
\frac{1}{j}+2^{n_j+1}=\frac{2^{M-N}}{k}+2^{n_k+1}+2^{M+1}r
$$
with $r\in\Z$ and since $j, k$ are prime numbers, we would have $j=k$ and $M=N$. Therefore \eqref{AS} and
\eqref{ASB} imply that $\pi_n\to\mathbf{1}$ pointwise but not uniformly on $X$, which means that the semigroup $(q(t))\subset \QQ(\Hh)$ induced by $X$ is strongly continuous with respect to all Calkin's representations, but is not uniformly continuous.
\end{example}

\vspace*{2mm}\noindent
{\bf Acknowledgement. }I acknowledge with gratitude the support from the National Science Centre, grant OPUS 19, project no.~2020/37/B/ST1/01052.

\bibliographystyle{amsplain}

\end{document}